\newtheorem{theorem}{Theorem}[section]
\newtheorem{proposition}[theorem]{Proposition}
\newtheorem{definition}[theorem]{Definition}
\newtheorem{lemma}[theorem]{Lemma}
\newtheorem{corollary}[theorem]{Corollary}
\def\c{{\mathbf{C}}}
\def\bbC{{\mathbf{C}}}
\def\cH{{\mathcal{H}}}
\newcommand{\bbP}{\mathbb{P}}
\newcommand{\rank}{\mathrm{rank}}
\newcommand{\ZFC}{\mathsf{ZFC}}
\newcommand{\card}[1]{{\vert #1 \vert} }
\newcommand{\forces}{\Vdash}
\renewcommand{\models}{\vDash}
\newcommand{\bR}{{\mathbb{R}}}
\newcommand{\powerset}{\mathcal{P}}
\newcommand{\cf}{{\rm cf}}
\def\k{\kappa}
\def\a{\alpha}
\def\b{\beta}
\def\l{\lambda}
\newcommand{\rprop}[1]{Proposition~\ref{#1}}
\newcommand{\rlem}[1]{Lemma~\ref{#1}}
\newcommand{\AD}{\mathsf{AD}}
\newcommand{\ADR}{\mathsf{AD}_{\mathbb{R}}}
\newcommand{\rmw}{\mathrm{w}}
\newcommand{\Add}{\mathrm{Add}}
\newcommand{\pmax}{\mathbb{P}_{\mathrm{max}}}
\newcommand{\bbQ}{\mathbb{Q}}
\newcommand{\bbR}{\mathbb{R}}
\newcommand{\breals}{\omega^{\omega}}
\newcommand{\HOD}{\mathrm{HOD}}
\newcommand{\DC}{\mathsf{DC}}
\newcommand{\cP}{\mathcal{P}}
\newcommand{\MM}{\mathsf{MM}}
\newcommand{\ZF}{\mathsf{ZF}}
\newcommand{\less}{\mathord{<}}
\newcommand{\cof}{\mathrm{cof}}
\newcommand{\Col}{\mathrm{Col}}
\newcommand{\restrict}{\mathord{\upharpoonright}}
\newcommand{\PFA}{\mathsf{PFA}}
\title{Failures of square in $\pmax$ extensions of Chang models\thanks{2000 Mathematics Subject Classifications:
03E15, 03E45, 03E60.}
\thanks{Keywords: Square, $\pmax$, Chang models.}}
\author{Paul B. Larson and Grigor Sargsyan}
\date{\today}
\begin{document}

\maketitle

\begin{abstract}
We show that the statements $\square(\omega_{3})$ and $\square(\omega_{4})$ both fail in the $\pmax$ extension of a variation of the Chang model introduced by Sargsyan.
\end{abstract}



\section{Introduction}

The Chang model \cite{Chang} is the smallest inner model of Zermelo-Fraenkel set theory ($\ZF$) containing every countable sequence of ordinals. Variations of the Chang model can be produced by adding parameters, restricting to the countable sequences from some fixed ordinal or by closing under ordinal definability. In this paper we show that (consistently, assuming the consistency of certain large cardinals) Jensen's square principles $\square(\omega_{3})$ and $\square(\omega_{4})$ fail in extensions of certain Chang models by Woodin's $\pmax$ forcing. The existence of Chang models with the required properties is proved in \cite{Changmodels_1} by the second author, from the existence of a Woodin cardinal which is a limit of Woodin cardinals.

In combination with the results of \cite{Changmodels_1}, the results in this paper have consequences for the inner model theory program.
In particular, we produce forcing extensions of Chang models satisfying the assumptions of the first sentence of the following theorem (see Theorems \ref{weakmain} and \ref{mainthrm}).

\begin{theorem}[Jensen-Schimmerling-Schindler-Steel \cite{JSSS}]\label{JSSSthrm} Assume that $\aleph_2^\omega=\aleph_2$ and that the principles $\square(\omega_3)$ and $\square_{\omega_3}$ both fail to hold. Let $g\subseteq \Col(\omega_3, \omega_3)$ be a $V$-generic filter. If $V[g]\models ``\sf{K}^c_{jsss}$ converges" then $(\sf{K}^c_{jsss})^{V[g]}\models$ ``there is a subcompact cardinal".
\end{theorem}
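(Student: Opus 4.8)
The plan is to argue contrapositively, using the theory of $\square$ in core models developed in \cite{JSSS} as the engine. Set $W=(K^{c}_{jsss})^{V[g]}$ and suppose toward a contradiction that $W$ has no subcompact cardinal. Since $K^{c}_{jsss}$ converges in $V[g]$, the model $W$ is iterable and satisfies the weak covering property in $V[g]$, so the results of \cite{JSSS} apply: $\square_\kappa$ holds in $W$ for every cardinal $\kappa$ of $W$, and $\square(\kappa)$ holds in $W$ for every $W$-inaccessible cardinal $\kappa$ (for a successor cardinal $\kappa=(\mu^{+})^{W}$ of $W$, $\square(\kappa)$ is in any case a consequence of $\square_\mu$ in $W$). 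The hypothesis $\aleph_2^{\omega}=\aleph_2$, together with the large cardinals used to build the Chang model, is what permits the $K^{c}_{jsss}$ construction to be carried to convergence with weak covering; forcing with the $\less\omega_3$-closed poset $\Col(\omega_3,\omega_3)$ is a technical device, regulating the construction at $\omega_3$ while preserving $\aleph_2^{\omega}=\aleph_2$ and the failure of $\square_{\omega_3}$.

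Next I would locate $\omega_3$ and $\omega_4$ (which are unchanged by $\Col(\omega_3,\omega_3)$) inside the cardinal structure of $W$. Being regular in $V[g]$, they are cardinals of $W$; and $\omega_3$, being regular in $W$ as well, is there either a successor cardinal $(\mu^{+})^{W}$ with $\mu<\omega_3$ or a $W$-inaccessible cardinal. In the first case $\square_\mu$ in $W$ gives a coherent sequence on $\omega_3$ with $\mathrm{ot}(C_\alpha)\le\mu<\omega_3$; in the second, $\square(\omega_3)$ in $W$ gives one directly; and, using weak covering and the hypotheses to see that $W$ computes the relevant successor cardinals correctly, one likewise extracts a $\square_{\omega_3}$-sequence of $W$ when $(\omega_3^{+})^{W}=\omega_4$. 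The two square failures in the statement are to be matched to these configurations of the $W$-cardinals around $\omega_3$, and the eventual subcompact will be the $W$-cardinal $\kappa\le\omega_3$ at which $\square_\kappa$ is forced to fail.

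The last and hardest step is to transfer these $W$-internal square sequences to the real world. A square sequence built in $W$ need not remain one in $V[g]$, but the sequences produced in \cite{JSSS} are canonical, constructed by condensation, so that a thread through such a sequence in an outer model with the same relevant successor cardinal would reflect — along a continuous chain of hulls of the mouse carrying the sequence — to a thread inside $W$, which is impossible. Since weak covering ensures that $W$ computes that successor cardinal correctly in $V[g]$, we thereby obtain a genuine $\square(\omega_3)$- or $\square_{\omega_3}$-sequence in $V[g]$. To finish, one descends to $V$: as $\Col(\omega_3,\omega_3)$ is $\less\omega_3$-closed, the failure of $\square_{\omega_3}$ is preserved by a standard argument, so $\square_{\omega_3}$ holding in $V[g]$ would yield $\square_{\omega_3}$ in $V$, contrary to hypothesis; the $\square(\omega_3)$ branch is treated in parallel (or is used already in fixing the $W$-cardinal configuration). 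The resulting contradiction shows that $W$ has a subcompact cardinal. I expect the transfer step — together with the weak-covering bookkeeping needed to match the two hypothesized square failures to the two instances of the \cite{JSSS} dichotomy — to be where the real difficulty lies; the rest is \cite{JSSS} applied off the shelf.
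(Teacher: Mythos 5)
This statement is Theorem~\ref{JSSSthrm}, which the paper quotes from Jensen--Schimmerling--Schindler--Steel \cite{JSSS} and uses purely as a black box; the paper gives no proof of it whatsoever and explicitly disclaims doing any inner model theory. There is therefore no in-paper argument to compare yours against: the ``proof'' of this statement in the paper is a citation.

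Taken on its own terms, your sketch does capture the broad shape of the argument in \cite{JSSS} --- Schimmerling--Zeman-style square principles hold in a convergent $\sf{K}^c_{jsss}$ with no subcompact, weak covering locates $\omega_3$ and $\omega_4$ correctly relative to that model, and the canonical condensation-based square sequences are transferred to $V[g]$, contradicting the hypothesized square failures. But it is not a proof. Every substantive step (``the results of \cite{JSSS} apply,'' ``the sequences produced in \cite{JSSS} are canonical,'' ``the rest is \cite{JSSS} applied off the shelf'') is deferred to the very paper whose theorem you are trying to establish, so the proposal is an annotated restatement of the strategy rather than an argument. The one step you do try to carry out yourself --- descending from $V[g]$ to $V$ --- also has a soft spot: the theorem's hypotheses live in $V$ while the construction lives in $V[g]$, so you need that $\Col(\omega_3,\omega_3)$ does not \emph{create} $\square(\omega_3)$- or $\square_{\omega_3}$-sequences, and this does not follow from $\less\omega_3$-closure by ``a standard argument'' (the usual forcings that add such sequences are strategically closed, so closure alone does not settle the matter; this preservation is part of what \cite{JSSS} actually has to prove). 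The honest conclusion is that this is deep inner model theory whose proof belongs to \cite{JSSS}, and for the purposes of this paper it should simply be cited, as the authors do.
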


Since subcompact cardinals have greater consistency strength than Woodin limits of Woodin cardinals, this gives the following theorem, where the transitive model is the model $V[g]$ from Theorem \ref{JSSSthrm}.

\begin{theorem}\label{Kc corollary} It is consistent relative to the existence of a Woodin cardinal that is a limit of Woodin cardinals that there is a transitive model of $\sf{ZFC}$ in which the $\sf{K}^c_{jsss}$ construction does not converge.
\end{theorem}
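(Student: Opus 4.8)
The plan is to combine Theorem~\ref{JSSSthrm} with the main results of this paper (Theorems~\ref{weakmain} and~\ref{mainthrm}) and the existence result of~\cite{Changmodels_1}. First I would invoke~\cite{Changmodels_1}: from a Woodin cardinal that is a limit of Woodin cardinals one obtains (a model of $\ZFC$ in which there is) a variation of the Chang model of the required form, and then by Theorems~\ref{weakmain} and~\ref{mainthrm} its $\pmax$ extension $W$ satisfies $\aleph_2^\omega = \aleph_2$ together with the failure of $\square(\omega_3)$. One must also note that $\square_{\omega_3}$ fails in $W$: this is immediate since $\square_{\omega_3}$ implies $\square(\omega_3)$, so the failure of the latter gives the failure of the former. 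Thus $W$ meets the hypotheses of the first sentence of Theorem~\ref{JSSSthrm}, with $\omega_3 = \aleph_3^W$.

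Next I would pass to the generic extension. Let $g \subseteq \Col(\omega_3,\omega_3)$ be $W$-generic. There are two cases. If $W[g] \models ``\sf{K}^c_{jsss}$ does not converge'', then $W[g]$ is already the desired transitive model of $\ZFC$, and we are done. Otherwise $W[g] \models ``\sf{K}^c_{jsss}$ converges'', and Theorem~\ref{JSSSthrm} applies to give $(\sf{K}^c_{jsss})^{W[g]} \models ``$there is a subcompact cardinal$"$. Now I would appeal to the standard fact from inner model theory that a subcompact cardinal has strictly greater consistency strength than a Woodin limit of Woodin cardinals—indeed, a subcompact cardinal implies the existence of an inner model with a proper class of Woodin cardinals, hence with a Woodin limit of Woodins, and much more. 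Consequently $(\sf{K}^c_{jsss})^{W[g]}$, being a transitive model of $\ZFC$ with a subcompact cardinal, yields via its own internal $\sf{K}^c_{jsss}$ construction (or via the fact that below a subcompact there is a non-convergent or at any rate a non-trivial situation—more simply, by re-running the whole argument one level down) a transitive model of $\ZFC$ in which $\sf{K}^c_{jsss}$ does not converge; alternatively and more cleanly, the existence of a subcompact cardinal already outright refutes convergence of $\sf{K}^c_{jsss}$ in an appropriate sense, so $(\sf{K}^c_{jsss})^{W[g]}$ itself (or a suitable collapse extension of it) is the desired model. Either way one extracts a transitive model of $\ZFC$ in which the $\sf{K}^c_{jsss}$ construction does not converge, and this model was produced from the consistency of a Woodin limit of Woodin cardinals.

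I expect the main obstacle to be the precise packaging of the last step: articulating exactly why ``there is a subcompact cardinal'' in a transitive $\ZFC$ model lets us conclude the existence of a transitive $\ZFC$ model where $\sf{K}^c_{jsss}$ fails to converge, without circularity. The cleanest route is the dichotomy above: either $\sf{K}^c_{jsss}$ fails to converge in $W[g]$ already, and there is nothing more to prove, or it converges and produces an inner model with a subcompact cardinal—but a subcompact cardinal outstrips a Woodin limit of Woodins in consistency strength, so the existence of such an inner model is exactly the statement that it was consistent (relative to our hypothesis) that $\sf{K}^c_{jsss}$ converges to something with large cardinals beyond our starting assumption. The intended reading of the theorem is really this consistency dichotomy: relative to a Woodin limit of Woodins, it is consistent that there is a transitive $\ZFC$ model where $\sf{K}^c_{jsss}$ does not converge—because in $W[g]$ it either fails directly, or its convergence witnesses more large-cardinal strength than we assumed, which is the sense in which the $\sf{K}^c_{jsss}$ construction is shown to be problematic. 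I would therefore state the argument as this two-case analysis and let the inner-model-theoretic comparison of subcompactness with Woodin limits of Woodins (a known result, used as a black box) carry the consistency-strength bookkeeping.
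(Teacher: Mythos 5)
Your overall strategy --- build $W$ from Theorems \ref{weakmain}/\ref{mainthrm}, verify the hypotheses of Theorem \ref{JSSSthrm}, and conclude --- is the paper's, but there is a genuine error in the verification step. You derive $\neg\square_{\omega_3}$ in $W$ from $\neg\square(\omega_3)$ via the claim that $\square_{\omega_3}$ implies $\square(\omega_3)$. That implication is false: $\square_{\omega_3}$ concerns coherent sequences on $\omega_4$, not on $\omega_3$, and the only available implication is the one the paper records, $\square_{\kappa}\Rightarrow\square(\kappa^{+})$, i.e.\ $\square_{\omega_3}\Rightarrow\square(\omega_4)\Rightarrow\square(\omega_4,\omega)$. (For a counterexample to your direction: in $L$ with $\kappa$ weakly compact, $\square_{\kappa}$ holds while $\square(\kappa)$ fails.) So the failure of $\square_{\omega_3}$ in $W$ must be extracted from $\neg\square(\omega_4,\omega)$ --- this is exactly why the paper proves the failure of square at $\omega_4$ in addition to $\omega_3$. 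Your proposal never invokes the $\omega_4$ result and therefore never actually establishes the hypothesis $\neg\square_{\omega_3}$ of Theorem \ref{JSSSthrm}.

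The endgame also needs repair. The intended conclusion is that $W[g]$ itself is the transitive model in which $\sf{K}^c_{jsss}$ fails to converge: if it converged, Theorem \ref{JSSSthrm} would yield an inner model of $W[g]$ containing a subcompact cardinal, hence $\mathrm{Con}(\ZFC+\text{subcompact})$ from $\mathrm{Con}(\ZFC+\text{Woodin limit of Woodins})$, contradicting the (black-box) fact that subcompacts have strictly greater consistency strength. Your ``case 2'' should therefore terminate in a contradiction; there is nothing to extract from $(\sf{K}^c_{jsss})^{W[g]}$, and the suggestions that a subcompact ``outright refutes convergence'' or that one can re-run the construction inside $(\sf{K}^c_{jsss})^{W[g]}$ have no basis and should be dropped. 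The theorem is an outright relative consistency statement, not the dichotomy you describe in your final paragraph.
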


This paper does not involve any inner model theory. We refer the reader to \cite{Changmodels_1} for a discussion of $\sf{K}^c_{jsss}$ and Theorem \ref{Kc corollary}.

\subsection{Square principles}

The square principles we consider in this paper were introduced by Ronald Jensen \cite{Jensen}. We briefly review their definitions.

\begin{definition}
Given a cardinal $\kappa$, the principle $\square_{\kappa}$ says that there exists a sequence $\langle C_{\alpha} : \alpha < \kappa^{+} \rangle$ such that for each $\alpha < \kappa^{+}$,
\begin{itemize}
\item each $C_{\alpha}$ is a closed cofinal subset of $\alpha$;
\item for each limit point $\beta$ of $C_{\alpha}$, $C_{\beta} = C_{\alpha} \cap \beta$;
\item the ordertype of each $C_\alpha$ is at most $\kappa$.
\end{itemize}
\end{definition}

For any cardinal $\kappa$, $\square_{\kappa}$ implies the statement $\square(\kappa^{+})$ as defined below.

\begin{definition}
Given an ordinal $\gamma$, the principle $\square(\gamma)$ says that there exists a sequence $\langle C_{\alpha} : \alpha < \gamma \rangle$ such that
\begin{itemize}
\item for each $\alpha < \gamma$,
\begin{itemize}
\item each $C_{\alpha}$ is a closed cofinal subset of $\alpha$;
\item for each limit point $\beta$ of $C_{\alpha}$, $C_{\beta} = C_{\alpha} \cap \beta$;
\end{itemize}
\item there is no thread through the sequence, i.e., there is no closed unbounded $E \subseteq \gamma$ such that $C_{\alpha} = E \cap \alpha$ for every limit point $\alpha$ of $E$.
\end{itemize}
\end{definition}

A $\square(\gamma)$-\emph{sequence} is a sequence $\langle C_{\alpha} : \alpha < \gamma \rangle$ as in the definition of $\square(\gamma)$.
A \emph{potential} $\square(\gamma)$-sequence is a sequence $\langle C_{\alpha} : \alpha < \gamma \rangle$ satisfying all but the last condition in the definition.  An elementary argument gives the important fact that if $\gamma$ has uncountable cofinality, then each potential $\square(\gamma)$-sequence has at most one thread.

We will in fact obtain the negation of a weaker version of square, also due to Jensen.

\begin{definition} Given an ordinal $\gamma$ and a cardinal $\delta$, the principle $\square(\gamma, \delta)$ asserts
the existence of a sequence \[\langle \mathcal{C}_{\alpha} \mid \alpha < \gamma \rangle\] satisfying the following conditions.
\begin{itemize}
\item
For each $\alpha < \gamma$,
\begin{itemize}
\item
$0 < |\mathcal{C}_{\alpha}| \leq \delta$;
\item
each element of $\mathcal{C}_{\alpha}$ is club in $\alpha$;
\item
for each member $C$ of $\mathcal{C}_{\alpha}$, and each limit point $\beta$ of $C$, \[C \cap
\beta \in\mathcal{C}_{\beta}.\]
\end{itemize}
\item
There is no thread through the sequence, that is, there is no club $E \subseteq
\gamma$ such that $E \cap \alpha \in \mathcal{C}_{\alpha}$ for every limit point $\alpha$ of $E$.
\end{itemize}
\end{definition}

As above, a $\square(\gamma,\delta)$-sequence is a sequence $\langle C_{\alpha} : \alpha < \gamma \rangle$ as in the definition of $\square(\gamma, \delta)$. A \emph{potential} $\square(\gamma,\delta)$-sequence is a sequence $\langle C_{\alpha} : \alpha < \gamma \rangle$ satisfying all but the last condition in the definition. Again, an elementary argument shows that if the cofinality of $\gamma$ is greater than $|\delta|^{+}$, then each potential $\square(\gamma, \delta)$-sequence has at most $|\delta|$ many threads.
Note that $\square(\gamma)$ is $\square(\gamma, 1)$ and if $\delta < \eta$ then $\square(\kappa, \delta)$ implies $\square(\kappa, \eta)$.

We use Todorcevic's theorem \cite{Stevo_square} that if $\gamma$ has cofinality at least $\omega_{2}$ then the restriction of the Proper Forcing Axiom ($\PFA$) to partial orders of cardinality $\gamma^{\omega}$ implies the failure of $\square(\gamma, \omega_{1})$. For $\gamma < \omega_3$ this fragment of $\PFA$ follows from $\MM^{++}(\mathfrak{c})$ (a technical strengthening of the restriction of Martin's Maximum to partial orders of cardinality at most the continuum), since $\MM^{++}(\mathfrak{c})$ implies that $\mathfrak{c} = \aleph_{2}$ by the results of \cite{FMSI}. We will not need the definition of $\MM^{++}(\mathfrak{c})$ in this paper, as our only use of it will be to apply Todocevic's theorem, and Woodin's theorems on obtaining $\MM^{++}(\mathfrak{c})$ in $\pmax$ extensions (see Subsection \ref{pmaxssec}).

Theorem \ref{weakmain} is one version of the main theorem of this paper. A more explicit version is given in Theorem \ref{mainthrm} below. In light of Todorcevic's theorem it should be possible to replace $\neg\square(\omega_{3}, \omega)$  and $\neg\square(\omega_{4}, \omega)$ below with $\neg\square(\omega_{3}, \omega_1)$ and $\neg\square(\omega_{4}, \omega_1)$, but this remains open.

\begin{theorem}\label{weakmain} The consistency of $\ZFC$ plus the existence of a Woodin limit of Woodin cardinals implies the consistency of \[\ZFC + \aleph_2^\omega=\aleph_2 + \neg\square(\omega_{3}, \omega)  + \neg\square(\omega_{4}, \omega).\]
\end{theorem}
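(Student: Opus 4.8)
The plan is to produce the model by forcing with $\pmax$ over the variant Chang model supplied by \cite{Changmodels_1}, and then to check the three conjuncts one at a time. Starting from $\ZFC$ plus a Woodin cardinal that is a limit of Woodin cardinals, the first step is to invoke \cite{Changmodels_1} to obtain a transitive inner model $\mathcal{C}$ --- a variation of the Chang model, closed under countable sequences, satisfying $\ZF + \DCR$ and the relevant determinacy hypotheses --- which in addition carries the structural data guaranteeing (i) that Woodin's $\pmax$ analysis applies over $\mathcal{C}$, and (ii) that the ordinals destined to become $\omega_{3}$ and $\omega_{4}$ in the $\pmax$ extension carry nontrivial large-cardinal reflection inside $\mathcal{C}$. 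Let $g \subseteq \pmax$ be $\mathcal{C}$-generic and set $W = \mathcal{C}[g]$; the claim is that $W$ is as required.

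The first two conjuncts come directly from the $\pmax$ machinery recalled in Subsection~\ref{pmaxssec}: $W \models \ZFC$, and by Woodin's theorems $W \models \MM^{++}(\mathfrak{c})$, so $\mathfrak{c} = \aleph_{2}$ in $W$ by \cite{FMSI}, whence $\aleph_{2}^{\omega} = (2^{\aleph_{0}})^{\aleph_{0}} = 2^{\aleph_{0}} = \aleph_{2}$ there. The substance is therefore $\neg\square(\omega_{3}, \omega)$ and $\neg\square(\omega_{4}, \omega)$ in $W$. For this I would first run a cardinal-preservation analysis of $\pmax$ over $\mathcal{C}$: using that $\pmax$ is almost homogeneous and of size continuum, every ordinal below $\Theta^{\mathcal{C}}$ acquires cardinality $\leq\aleph_{2}$ in $W$ while the cardinals of $\mathcal{C}$ at and above $\Theta^{\mathcal{C}}$ are preserved, so that $\omega_{3}^{W}$ and $\omega_{4}^{W}$ are identified with ordinals $\delta_{0} \leq \delta_{1}$ of $\mathcal{C}$ with $\delta_{0} = \Theta^{\mathcal{C}}$. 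By the choice of $\mathcal{C}$, $\delta_{0}$ carries a descriptive-set-theoretic form of Woodinness (inherited from the Woodin limit of Woodin cardinals) and $\delta_{1}$ is a genuine large cardinal --- weakly compact, or Woodin, say --- of $\mathcal{C}$ or of a suitable inner model of $\mathcal{C}$ computing $\delta_{1}^{+}$ correctly. The next step is to verify that each reflection survives $\pmax$ in generic-embedding form: for $\delta_{1}$ by a L\'evy--Solovay-style argument, using that $\pmax \subseteq H(\omega_{1})^{\mathcal{C}}$ is small relative to $\delta_{1}$; for $\delta_{0} = \Theta^{\mathcal{C}}$ by Woodin's analysis of $\pmax$, which is tailored to transfer exactly this kind of property to the extension.

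The thread argument then runs uniformly. Suppose toward a contradiction that $\vec{\mathcal{C}} = \langle \mathcal{C}_{\alpha} : \alpha < \delta_{i} \rangle$ is a $\square(\delta_{i}, \omega)$-sequence in $W$, for $i \in \{0,1\}$. Since $\cf^{W}(\delta_{i}) = \delta_{i} > \aleph_{1}$, the remark following the definition of $\square(\gamma, \delta)$ shows that a potential $\square(\delta_{i}, \omega)$-sequence has at most $\aleph_{0}$ threads, so it suffices to produce a single club $E \subseteq \delta_{i}$ threading $\vec{\mathcal{C}}$. Using the capturing/absoluteness properties of $\pmax$ one argues that a coding of $\vec{\mathcal{C}}$ is captured in $\mathcal{C}$ (respectively in the inner model witnessing the large-cardinal property of $\delta_{i}$); then, applying a generic elementary embedding $j$ with $\mathrm{crit}(j) = \delta_{i}$ obtained from that property, the coherence of $\vec{\mathcal{C}}$ propagates upward to yield such an $E$, contradicting the no-thread clause. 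Carrying this out for $i=0$ and $i=1$ completes the proof.

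The hard part is precisely this last step for $\gamma = \omega_{3}$ and $\gamma = \omega_{4}$. As noted after Todorcevic's theorem above, these ordinals lie strictly above $\mathfrak{c} = \aleph_{2}$ in $W$, so the absence of threads cannot be extracted from the $\PFA$-fragment that $\MM^{++}(\mathfrak{c})$ delivers; the argument must instead be driven by the bespoke large-cardinal reflection that \cite{Changmodels_1} builds into $\mathcal{C}$ at exactly these two ordinals. Making this rigorous will require a delicate treatment of $\pmax$ over $\mathcal{C}$ --- that it collapses neither $\delta_{0}$ nor $\delta_{1}$, that it preserves their reflection in the needed form (with the two cases handled somewhat differently, as above), and that the no-thread hypothesis on $\vec{\mathcal{C}}$ is downward absolute to a model that can see that reflection. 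Assembling these ingredients is the content of the explicit version, Theorem~\ref{mainthrm}, and of the sections following it.
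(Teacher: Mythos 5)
There are two genuine gaps. First, your model $W = \mathcal{C}[g]$ with $g \subseteq \pmax$ does not satisfy $\ZFC$: as noted in Subsection~\ref{pmaxssec}, forcing with $\pmax$ over a model of $\ADR$ cannot wellorder $\cP(\bbR)$. The paper instead forces with $(\pmax * \Add(\kappa,1) * \Add(\lambda,1))^{\c^{+}_{\lambda}}$ over $\c^{+}_{\lambda} = \HOD_{\lambda^{\omega}}$, and even the standard $\pmax * \Add(\omega_{3},1)$ of Theorem~\ref{wsthrm} would not suffice here, because $\c^{+}_{\lambda}$ is not a model of $V = L(\cP(\bbR))$; one needs $\Add(\lambda,1)$ to wellorder $\lambda^{\omega}$, and verifying that the resulting extension satisfies $\ZFC$ costs the paper several sections (the $\DC_{\aleph_{2}}$ and $\DC_{\aleph_{3}}$ arguments via strong regularity of $\lambda$, Lemmas~\ref{cpgdc2lem} and~\ref{dcomega_3lem}).

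Second, and more fundamentally, your mechanism for killing $\square(\omega_{3},\omega)$ and $\square(\omega_{4},\omega)$ --- large-cardinal reflection at $\delta_{0}$ and $\delta_{1}$ preserved in generic-embedding form, with a thread produced by an embedding of critical point $\delta_{i}$ --- is neither available nor what drives the result. The ordinal that becomes $\omega_{4}$ is $\lambda = (\kappa^{+})^{\c_{\lambda}}$, a successor cardinal of the Chang model; it carries no weak compactness or Woodinness in any relevant inner model, and no L\'evy--Solovay argument applies to it. The paper's actual mechanism (Theorem~\ref{theorem:vanilla}) is different in kind: the generic filter is also generic over the \emph{outer} model $V \models \ddagger$, where Woodin's theorems give $\MM^{++}(\mathfrak{c})$ and $\DC_{\aleph_{1}}$ in $V[G][I]$, so by Todorcevic's theorem every potential $\square(\gamma,\omega)$-sequence with $\cf(\gamma) \geq \omega_{2}$ is threaded there (the cofinality preservation comes from Woodin's covering Theorem~\ref{thm:woodin}, not from homogeneity of $\pmax$). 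Since such a sequence has at most countably many threads and each $\mathcal{C}_{\alpha}$ is indexed in order type at most $\omega$, some thread is ordinal definable from the parameters of the name and hence already lies in $\HOD^{V}_{X}[G][I] = \c^{+}_{\lambda}[G][I]$, contradicting the no-thread clause. Your proposal explicitly defers ``the hard part'' to Theorem~\ref{mainthrm}, but the route you sketch toward it would not get there.
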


\subsection{Chang models and $\Join_{\lambda}$}\label{cmjoinssec}

We let $\cH$ represent the class of pairs of ordinals $(\alpha, \beta)$ such that the $\alpha$th element of the standard definability order of $\HOD$ is an element of the $\beta$th. This is just a technical convenience that allows us to give a concise statement of results from \cite{Changmodels_1}; the only property of $\cH$ we use in this paper is that it is a definable class of pairs of ordinals. Given an ordinal $\gamma$, we write $\cH\restrict \gamma$ for $\cH \cap (\gamma \times \gamma)$. Given an ordinal $\gamma$,
we write $\c_\gamma^{-}$ for the structure $L_\gamma(\cH, \gamma^{\omega})$, which is constructed relative to the predicate $\cH$, adding (for each ordinal $\alpha < \gamma$) all $\omega$-sequences from $\alpha$ at stage $\alpha + 1$. Note that $\gamma$ is the ordinal height of this structure. We also write $\c_\gamma$ for $L(\cH\restrict \gamma, \gamma^{\omega})$ and $\c^+_\gamma$ for $\HOD_{\gamma^{\omega}}$.


We let $\rmw(A)$ denote the Wadge rank of a set $A \subseteq \breals$, and for any ordinal $\alpha$ let $\Delta_{\alpha}$ denote the set of subsets of $\breals$ of Wadge rank less than $\alpha$.
We will work with models of $\AD^{+}$ (an extension of the Axiom of Determinacy due to Hugh Woodin; see for instance \cite{LarsonAD}) in which some ordinal satisfies the following statement (we refer the reader to \cite{Solovay, Woodin, LarsonAD} for the definition of the Solovay sequence).

\begin{definition}
For an ordinal $\lambda$, $\Join_\lambda$ is the statement that, letting $\kappa$ be $\Theta^{\c_\lambda}$,
\begin{itemize}
\item $\kappa$ is a regular member of the Solovay sequence below $\Theta$,
\item $\c^+_{\lambda}\models \lambda=\kappa^{+} +\cf(\lambda)=\lambda$,
\item $\c_\lambda^{-} \cap \powerset(\bR)
=\c_{\lambda}^+\cap \powerset(\bR)=\Delta_\kappa$,
\item $\powerset(\kappa^\omega)\cap \c_{\lambda}=\powerset(\kappa^\omega)\cap \c_{\lambda}^+$,
\item $\kappa\leq \cf(\lambda)$.
\end{itemize}
\end{definition}


Since $\AD^{+}$ implies that successor members of the Solovay sequence below $\Theta$ have cofinality $\omega$ (see \cite{LarsonAD}), $\AD^{+}$ + $\Join_{\lambda}$ implies that $\kappa = \Theta^{\c_\lambda}$ is a limit member of the Solovay sequence. Woodin has shown that $\AD^{+}$ implies each of the following (see \cite{LarsonAD}):
\begin{itemize}
\item $\AD^{+}$ holds in every inner model of $\ZF$ containing $\bbR$;
\item $\ADR$ holds if and only if the Solovay sequence has limit length.
\end{itemize}
It follows that, assuming $\Join_{\kappa^{+}}$, $L(\Delta_{\kappa}) \models \ADR$.


We let $\ddagger$ stand for the theory $\sf{ZF}$ + $V=L(\powerset(\bR))$ + $\sf{AD}_{\mathbb{R}}$ + ``$\Theta$ is regular".
Results of Solovay from \cite{Solovay} say that $\ddagger$ implies $\DC$ (the statement that every tree of height $\omega$ without terminal nodes has a cofinal branch) and also the statement that the sharp of each set of reals exists. By
results of Becker and Woodin (see \cite{LarsonAD}), $\ADR + \DC$ implies that all subsets of $\breals$ are Suslin, and thus that $\AD^{+}$ holds.





Models of $\exists \lambda \Join_{\lambda}$ are given by the following theorem from \cite{Changmodels_1}.

\begin{theorem}\label{inputthrm}
  Suppose that there exists a Woodin cardial which is a limit of Woodin cardinals. Then in a forcing extension there is an inner model satisfying $\ddagger$  + $\exists \lambda \Join_{\lambda}$.
\end{theorem}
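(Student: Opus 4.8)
The plan is to obtain the model by a derived-model construction at the Woodin limit of Woodin cardinals, and then to locate the ordinal $\lambda$ witnessing $\Join_{\lambda}$ inside it.

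\emph{Overview and the model of $\ddagger$.} Fix a Woodin cardinal $\delta$ that is a limit of Woodin cardinals, together with an increasing sequence of Woodin cardinals cofinal in it, let $G\subseteq\Col(\omega,{<}\delta)$ be $V$-generic, put $\bR^{*}=\bigcup_{\alpha<\delta}\bR^{V[G\restrict\alpha]}$, and let $W$ be $L(\powerset(\bR))$ as computed in the derived model $D(V,\delta)=L(\bR^{*},\mathrm{Hom}^{*})$. Since $\delta$ is Woodin it is a limit of ${<}\delta$-strong cardinals, so Woodin's derived model analysis gives $D(V,\delta)\models\AD^{+}$; using in addition that $\delta$ is a limit of Woodin cardinals which is itself Woodin, the same analysis gives $W\models\ADR+{}$``$\Theta$ is regular''. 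As $W\models V=L(\powerset(\bR))$ by construction and $\DC$ holds in $W$ by Solovay's theorem (quoted above), $W\models\ddagger$, and $W$ is an inner model of $V[G]$. It remains to find, inside $W$, an ordinal $\lambda$ with $\Join_{\lambda}$.

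\emph{Choosing $\kappa$ and $\lambda$.} Work in $W$ and let $\langle\theta_{\alpha}:\alpha\le\Theta^{W}\rangle$ be the Solovay sequence. Since $\Theta^{W}$ is regular and $\ADR$ holds, the fixed points $\theta_{\alpha}=\alpha$ are club in $\Theta^{W}$, and each such limit fixed point $\kappa$ satisfies $L(\Delta_{\kappa})\models\ADR$. Using the regularity of $\Theta^{W}$, together with the large-cardinal structure below $\delta$ in the original model (concretely, $\kappa$ should arise as $\Theta$ of the derived model taken at an intermediate limit of Woodin cardinals $\bar\delta<\delta$, which is what consumes that structure), one arranges a \emph{regular} member $\kappa$ of the Solovay sequence below $\Theta^{W}$ at which the Chang-type hierarchy $\c_{\gamma}=L(\cH\restrict\gamma,\gamma^{\omega})$ stabilizes. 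One then lets $\lambda$ be the least ordinal greater than $\kappa$ with $\Theta^{\c_{\lambda}}=\kappa$, and chooses the reflection point so that moreover $\kappa\le\cf(\lambda)$. This directly yields the first, fourth, and fifth clauses of $\Join_{\lambda}$ together with $\kappa=\Theta^{\c_{\lambda}}$.

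\emph{The remaining clauses, and the main obstacle.} What is left --- $\c^{-}_{\lambda}\cap\powerset(\bR)=\c^{+}_{\lambda}\cap\powerset(\bR)=\Delta_{\kappa}$, the agreement $\powerset(\kappa^{\omega})\cap\c_{\lambda}=\powerset(\kappa^{\omega})\cap\c^{+}_{\lambda}$, and $\c^{+}_{\lambda}\models\lambda=\kappa^{+}+\cf(\lambda)=\lambda$ --- is the heart of the theorem and the step I expect to be hardest. These are not soft facts about inner models of $\ZF$: proving $\c^{+}_{\lambda}\cap\powerset(\bR)=\Delta_{\kappa}$ and $\HOD_{\lambda^{\omega}}\models\lambda=\kappa^{+}$ requires recognizing $\c^{+}_{\lambda}=\HOD_{\lambda^{\omega}}$ as a hod premouse over $\Delta_{\kappa}$ and running the $\HOD$-analysis for this Chang model, and the agreement of $\powerset(\kappa^{\omega})$ between $\c_{\lambda}$ and $\c^{+}_{\lambda}$ is obtained via a comparison-and-condensation argument between the two structures. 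This inner-model-theoretic work is exactly what is carried out in \cite{Changmodels_1}. The subtle point is that the reflection point $\kappa$ (equivalently $\lambda$) has to be chosen so that all of these agreements hold at once, and it is precisely here that the full strength of a Woodin cardinal which is a limit of Woodin cardinals is needed.
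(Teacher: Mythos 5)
There is no proof of Theorem \ref{inputthrm} in this paper to compare you against: the theorem is imported wholesale from \cite{Changmodels_1}, and the authors state explicitly that the present paper ``does not involve any inner model theory.'' So the relevant question is whether your sketch would stand on its own, and it does not. Your first stage is the expected one --- take the derived model at the Woodin limit of Woodin cardinals $\delta$ to get an inner model of $\ddagger$ --- but even there the assertion that ``the same analysis gives'' $\ADR$ plus ``$\Theta$ is regular'' is doing real work: $\ADR$ in the derived model comes from $\delta$ being a limit of ${<}\delta$-strong cardinals, while the regularity of $\Theta$ is a separate and substantially harder theorem with its own hypotheses, not a routine corollary of the $\AD^{+}$ analysis. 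You should at minimum cite a precise statement rather than gesture at it.

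The decisive gap is in the second half. The clauses $\c^{-}_{\lambda}\cap\powerset(\bR)=\c^{+}_{\lambda}\cap\powerset(\bR)=\Delta_{\kappa}$, $\powerset(\kappa^{\omega})\cap\c_{\lambda}=\powerset(\kappa^{\omega})\cap\c^{+}_{\lambda}$, and $\c^{+}_{\lambda}\models\lambda=\kappa^{+}\,\wedge\,\cf(\lambda)=\lambda$ are not side conditions to be checked after a clever choice of reflection point; they are the theorem. Your proposal names them as ``the main obstacle'' and then defers them entirely to \cite{Changmodels_1}, which is where the authors themselves put the work. Moreover, your prescription for $\lambda$ (``the least ordinal greater than $\kappa$ with $\Theta^{\c_{\lambda}}=\kappa$, chosen so that moreover $\kappa\le\cf(\lambda)$'') is not obviously coherent with the requirement that $\lambda$ be \emph{regular} in $\c^{+}_{\lambda}$ and equal to $(\kappa^{+})^{\c^{+}_{\lambda}}$; reconciling these is part of the $\HOD$-analysis you are omitting. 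As it stands your text is a correct map of where the proof lives, not a proof; if the intent is to reprove Theorem \ref{inputthrm} rather than cite it, the inner-model-theoretic core must actually be carried out.
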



\subsection{Variants of $\DC$}

The principle of Dependent Choice ($\DC$) can be varied by restricting the nodes of the tree to some set, or by considering trees of uncountable height.

Given a binary relation $R$ on a set $X$ and an ordinal $\delta$ we say that $f\colon\delta\rightarrow X$ is an $R$-\emph{chain} if $f(\alpha)R f(\beta)$ holds for all ordinals $\alpha < \beta$ below $\delta$. Given an ordinal $\eta$ we say that $R$ is $\eta$-\emph{closed} if for every $\delta<\eta$ and for every $R$-chain $f\colon \delta \rightarrow X$ there is an $r\in X$ such that for every $\alpha < \delta$, $f(\alpha) R r$. We then say that $\sf{DC}_{\gamma}$ holds for a cardinal $\gamma$ if for every  cardinal $\eta\leq \gamma$ and every $\eta$-closed binary relation $R$ there is an $R$-chain $f:\gamma \rightarrow X$. We write $\DC$ for $\DC_{\omega}$.

Given a set $X$ and a cardinal $\gamma$, we write $\DC_{\gamma}\restrict X$ for the restriction of $\DC_{\gamma}$ to binary relations on $X$, which we also call $\DC_{\gamma}$ \emph{for relations on} $X$.

\subsection{$\pmax$}\label{pmaxssec}

The partial order $\pmax$ was introduced by Woodin in \cite{Woodin}. We list here the facts about $\pmax$ (all from \cite{Woodin}) that we will need.
\begin{itemize}
\item $\pmax$ conditions are elements of $H(\aleph_{1})$ and the corresponding order is definable in $H(\aleph_{1})$.

\item $\pmax$ is $\sigma$-closed.

\item  Forcing with $\pmax$ over a model of $\AD^{+} + \DC$ preserves the property of having cofinality at least $\omega_{2}$ (this follows from a combination of Theorems 3.45 and 9.32 of \cite{Woodin}, as outlined in Section \ref{threadsec} below).

\item If $M$ is a model of $\ZF + \AD^{+}$ and $G\subseteq \pmax^{M}$ is an $M$-generic filter, then the following hold in $M[G]$:
\begin{itemize}
\item $2^{\aleph_{0}} = \aleph_{2}$; \item $\Theta^{M} = \omega_{3}$; \item $\cP(\omega_{1}) \subseteq L(\bbR)[G]$.
\end{itemize}



\item Forcing with $\pmax$ over a model of $\ADR$ +  $V = L(\cP(\bbR))$ + ``$\Theta$ is regular" produces a model of $\ZF$ + $\DC_{\aleph_{2}}$ +  $\MM^{++}(\mathfrak{c})$.
\end{itemize}

Forcing with $\pmax$ over a model of $\ADR$ cannot wellorder $\cP(\bbR)$ (since a name for such a wellorder would induce a failure of Uniformization), but $\DC_{\aleph_{2}} + 2^{\aleph_{0}} = \aleph_{2}$ implies that $\cP(\bbR)$ may be wellordered  by forcing with $\Add(\omega_{3}, 1)$ (where, for any ordinal $\gamma$, $\Add(\gamma, 1)$ is the partial order adding a generic subset of $\gamma$ by initial segments). Since (by $\DC_{\aleph_{2}}$) $\Add(\omega_{3}, 1)$ does not add subsets of $\omega_{2}$, $\MM^{++}(\mathfrak{c})$ is preserved. This gives the following theorem, which is essentially Theorem 9.39 of \cite{Woodin}.

\begin{theorem}[Woodin]\label{wsthrm} Forcing with $\pmax * \Add(\omega_{3}, 1)$ over a model of $\ADR$ +  $V = L(\cP(\bbR))$ + ``$\Theta$ is regular" produces a model of $\ZFC$ + $\MM^{++}(\mathfrak{c})$.
\end{theorem}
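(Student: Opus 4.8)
The plan is to run the two-step iteration $\pmax * \Add(\omega_3,1)$ and track what survives. Let $N$ be the given model of $\ddagger$, i.e.\ of $\ZF + \ADR + V = L(\cP(\bbR)) + {}$``$\Theta$ is regular''; then $N \models \DC$ and hence $N \models \AD^{+}$ (via the implications recalled in the excerpt), so the listed facts about $\pmax$ apply. Let $g$ be $N$-generic for $\pmax$ and put $W = N[g]$. Then $W \models \ZF + \DC_{\aleph_2} + \MM^{++}(\mathfrak{c})$, $2^{\aleph_0} = \aleph_2$ holds in $W$, $\omega_3^{W} = \Theta^{N}$ (which is regular in $W$, since $W \models \DC_{\aleph_2}$, whence $\mathrm{AC}_{\aleph_2}$, whence the regularity of $\aleph_3$), and, since $\pmax$ is coded by a subset of $H(\aleph_1)^{N}$ and $N = L(\cP(\bbR))^{N}$, standard arguments give $W = L(\cP(\bbR))^{W}$. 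Since $W$ already satisfies $\MM^{++}(\mathfrak{c})$, it remains only to force with $Q := \Add(\omega_3,1)^{W}$ over $W$ and show that the extension $W[G]$ satisfies $\ZFC$ and still satisfies $\MM^{++}(\mathfrak{c})$.

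First I would record that $Q$ is ${<}\omega_3$-closed in $W$ and that, by the usual argument invoking $\DC_{\aleph_2}$ (build a descending $\omega_2$-sequence of conditions deciding a given name for a subset of $\omega_2$ coordinate by coordinate, then take a lower bound), $Q$ adds no new subset of $\omega_2$ over $W$. Consequently $Q$ adds no reals, no subsets of $\omega_1$, and no clubs through $\omega_1$; it preserves $\omega_1$ and $\omega_2$; and $\cP(\bbR)$ and $\mathfrak{c} = \aleph_2$ are computed the same way in $W$ and $W[G]$ (using here that $2^{\aleph_0} = \aleph_2$ makes $\bbR$ wellorderable in $W$, so that subsets of $\bbR$ correspond, via a map in $W$, to subsets of $\omega_2$).

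Next, for $\ZFC$ in $W[G]$: partition $\omega_3$ into the blocks $I_\alpha = [\omega_2 \cdot \alpha,\ \omega_2 \cdot \alpha + \omega_2)$ for $\alpha < \omega_3$ and set $G_\alpha = \{\xi < \omega_2 : G(\omega_2 \cdot \alpha + \xi) = 1\}$. A density argument inside $W$ — given a condition $p$ and a set $A \in \cP(\omega_2)^{W}$, extend $p$ beyond $\mathrm{dom}(p)$ so as to write $A$ into a fresh block $I_\alpha$ — shows that $\{G_\alpha : \alpha < \omega_3\} = \cP(\omega_2)^{W} = \cP(\omega_2)^{W[G]}$. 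So in $W[G]$ the generic yields a surjection of $\omega_3$ onto $\cP(\omega_2)$, hence a wellordering of $\cP(\omega_2)^{W[G]}$, and hence (via a bijection of $\bbR$ with $\omega_2$ lying in $W$) a wellordering of $\cP(\bbR)^{W[G]} = \cP(\bbR)^{W}$. Since $W[G] = L(\cP(\bbR)^{W})[G] = L(\cP(\bbR)^{W}, G)$ and both generators are wellorderable in $W[G]$, coding them together as a single set $C \subseteq \mathrm{Ord}$ gives $W[G] = L[C]$, so $W[G] \models \ZFC$.

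It remains to check $\MM^{++}(\mathfrak{c})$ in $W[G]$, and this is where the real work lies. Since $\mathfrak{c}^{W[G]} = \aleph_2$, an instance of $\MM^{++}(\mathfrak{c})$ there consists of a semiproper poset $P$ with $|P| \le \aleph_2$, an $\omega_1$-sequence of dense subsets of $P$, and an $\omega_1$-sequence of $P$-names for stationary subsets of $\omega_1$. Transporting the underlying set of $P$ to a subset of $\omega_2$ and passing to nice names, all this data is coded by subsets of $\omega_2$, so by the second paragraph it already lies in $W$. I would then verify that $P$ is semiproper already in $W$, that the relevant instances of $\Vdash_{P}$ are computed identically in $W$ and $W[G]$, and that sets of reflection points stationary in $W$ remain stationary in $W[G]$ (because $Q$ adds no club through $\omega_1$); so the instance descends to $W$, where $\MM^{++}(\mathfrak{c})$ holds, and the witnessing filter — an element of $W \subseteq W[G]$ meeting the same dense sets and producing the same stationary sets of reflection points — witnesses the original instance in $W[G]$. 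The delicate points are the absoluteness of semiproperness and of the forcing relation between $W$ and $W[G]$: in particular choosing the auxiliary $H(\theta)$ large enough, and checking that the available choiceless hypotheses ($\DC_{\aleph_2}$ in $W$ together with its consequences in the intermediate $P$-extensions) suffice for these verifications. What makes the whole argument go through is the double role of $\Add(\omega_3,1)$: closed enough, granted $\DC_{\aleph_2}$, to add no subset of $\omega_2$ and so to disturb neither $\cP(\bbR)$ nor $\MM^{++}(\mathfrak{c})$, yet with underlying set of size $2^{\aleph_2}$, so that its generic can enumerate the (a priori non-wellorderable) power set of $\omega_2$ and thereby restore the Axiom of Choice.
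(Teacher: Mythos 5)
Your proposal follows essentially the same route as the paper, which derives the theorem from exactly the three facts you isolate: the $\pmax$ extension satisfies $\DC_{\aleph_2}+2^{\aleph_0}=\aleph_2+\MM^{++}(\mathfrak{c})$, the generic for $\Add(\omega_3,1)$ enumerates $\cP(\omega_2)$ and hence wellorders $\cP(\bbR)$ (giving $\ZFC$), and by $\DC_{\aleph_2}$ the forcing adds no subsets of $\omega_2$, so $\MM^{++}(\mathfrak{c})$ is preserved. The paper states this only as a brief sketch citing Woodin's Theorem 9.39; your write-up correctly fills in the details, including the block-coding density argument and the (genuinely delicate, and in the paper unaddressed) absoluteness of semiproperness and the forcing relation between the two models.
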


Again, it follows from Todorcevic's theorem that $\square(\omega_{2}, \omega_{1})$ fails in such an extension.

By the results mentioned at the end of Section \ref{cmjoinssec}, the following hold in the context of $\ddagger$ + $\Join_{\lambda}$:
\begin{itemize}
\item $\ADR$ +  $V = L(\cP(\bbR))$ + ``$\Theta$ is regular";
\item the sharp of each subset of $\bbR$ exists;
\item $\bbC^{+}_{\lambda}$ $\models$ $\ADR$ + ``$\Theta$ is regular".
\end{itemize} 
However, $\bbC^{+}_{\lambda}$ is not a model of ``$V =L(\cP(\bbR))$", since, being closed under ordinal definability, it contains the sharp of its version of $\cP(\bbR)$ (i.e., $\Delta_{\kappa}$).
So we cannot just cite Theorem \ref{wsthrm} for our main result. As we shall see, it suffices, however, to wellorder $\lambda^{\omega}$, which can be done by forcing with $\Add(\omega_{4}, 1)$.


The following then is our main theorem.
The theorem builds upon \cite{SquarePaper} and, of course, \cite{Woodin}. As we shall see in Section \ref{threadsec}, the proof uses an argument from the proof of \cite[Theorem 7.3]{SquarePaper}.

\begin{theorem}\label{mainthrm}
Suppose $V\models \ddagger$ and that $\lambda$ is an ordinal for which $\Join_{\lambda}$ holds. Let $\kappa = \Theta^{\c_{\lambda}}$.
Let $(G, H, K)$ be a $V$-generic filter for the forcing iteration
\[(\pmax*\Add(\k, 1)*\Add(\l, 1))^{\c^{+}_{\l}}.\]
Then
\[\c^+_{\l}[G, H, K]\models \ZFC + \sf{MM}^{++}(c)+\neg\square(\omega_3, \omega)+\neg\square(\omega_{4}, \omega).\]
\end{theorem}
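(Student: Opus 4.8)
The plan is to unwind the iteration $\pmax*\Add(\k,1)*\Add(\l,1)$ over $\c^{+}_{\l}$ one coordinate at a time. First I would pin down the cardinal structure. Since $V\models\ddagger$ we have $V\models\AD^{+}$, so the inner model $\c^{+}_{\l}=\HOD_{\l^{\omega}}$ models $\AD^{+}$; by $\Join_{\l}$, $\k:=\Theta^{\c_{\l}}=\Theta^{\c^{+}_{\l}}$ is a regular cardinal (it is regular in $V$), $\c^{+}_{\l}\models\ADR+$``$\Theta$ is regular'', and $\l=(\k^{+})^{\c^{+}_{\l}}$. As $\pmax$ and its order lie in $H(\aleph_{1})$ and depend only on $\bbR$, $G$ is generic for $\pmax^{L(\cP(\bbR))^{\c^{+}_{\l}}}$ as well, and the $\pmax$ facts give, in $\c^{+}_{\l}[G]$: $\mathfrak{c}=\aleph_{2}$, $\k=\Theta^{\c^{+}_{\l}}=\omega_{3}$, and $\cP(\omega_{1})\of L(\bbR)[G]$. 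Since $\pmax$ preserves having cofinality at least $\omega_{2}$ and does not collapse cardinals of $\c^{+}_{\l}$ above $\Theta^{\c^{+}_{\l}}$, the cardinal $\k=\omega_{3}$ stays regular and $\l$ becomes $\omega_{4}$ in $\c^{+}_{\l}[G]$. Hence in $\c^{+}_{\l}[G]$ the forcing $\Add(\k,1)=\Add(\omega_{3},1)$ is ${<}\omega_{3}$-closed and $\Add(\l,1)=\Add(\omega_{4},1)$ is ${<}\omega_{4}$-closed: neither adds bounded subsets of its own cardinal, $\Add(\omega_{3},1)$ adds no subsets of $\omega_{2}$, and $\Add(\omega_{4},1)$ adds no subsets of $\omega_{3}$.

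Next, $\MM^{++}(c)$ and $\ZFC$. The part of Woodin's $\pmax$ analysis yielding $\MM^{++}(c)$ (together with $\DC_{\aleph_{2}}$) in a $\pmax$ extension uses only that the ground model satisfies $\ADR+$``$\Theta$ is regular'', which $\c^{+}_{\l}$ does, so $\c^{+}_{\l}[G]\models\MM^{++}(c)$; alternatively one applies Theorem~\ref{wsthrm} to $L(\cP(\bbR))^{\c^{+}_{\l}}$ and transfers, using that $\cP(\omega_{1})^{\c^{+}_{\l}[G]}\of L(\bbR)[G]$ together with the clauses of $\Join_{\l}$ comparing $\c_{\l}$ with $\c^{+}_{\l}$ forces the two $\pmax$ extensions to agree on $H(\aleph_{3})$. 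Since $\Add(\omega_{3},1)$ and $\Add(\omega_{4},1)$ add no subsets of $\omega_{2}$, $\MM^{++}(c)$ is preserved to $\c^{+}_{\l}[G,H,K]$. For full choice, the point is that $\c^{+}_{\l}=\HOD_{\l^{\omega}}$ carries a surjection of $\mathrm{Ord}\times\l^{\omega}$ onto itself definable from $\l^{\omega}$; forcing $\Add(\omega_{4},1)$ over $\c^{+}_{\l}[G,H]$ well-orders $\l^{\omega}$ (exactly as Woodin well-orders $\cP(\bbR)$ via $\Add(\omega_{3},1)$, using $|\l^{\omega}|=\aleph_{4}$ there and the ambient fragments of $\DC$), and as $G,H,K$ are then each coded by a subset of an ordinal (note $\bbR\of\l^{\omega}$ is now well-ordered), $\c^{+}_{\l}[G,H,K]$ becomes the $\HOD$ of a parameter whose transitive closure it well-orders, hence a model of $\AC$.

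The bulk of the work is $\neg\square(\omega_{3},\omega)$ and $\neg\square(\omega_{4},\omega)$, following the argument in the proof of \cite[Theorem~7.3]{SquarePaper}. Fix $\gamma\in\{\omega_{3},\omega_{4}\}$ and suppose for contradiction that $\vec{\mathcal{C}}$ is a genuine $\square(\gamma,\omega)$-sequence in $W:=\c^{+}_{\l}[G,H,K]$. By the closure recorded above, $\vec{\mathcal{C}}$ lies in the model obtained immediately after forcing $\Add(\gamma,1)$ ($\c^{+}_{\l}[G,H]$ when $\gamma=\omega_{3}$, all of $W$ when $\gamma=\omega_{4}$), every proper initial segment $\vec{\mathcal{C}}\restriction\a$ ($\a<\gamma$), being essentially a bounded subset of $\gamma$, lies in the model one step earlier ($\c^{+}_{\l}[G]$, resp.\ $\c^{+}_{\l}[G,H]$), and — since no subsequent forcing adds a club subset of $\gamma$ — $\vec{\mathcal{C}}$ has a thread in $W$ exactly when it does in that intermediate model. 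Writing $\bT(\vec{\mathcal{C}})$ for the poset of closed bounded subsets of $\gamma$ cohering with $\vec{\mathcal{C}}$, ordered by end-extension, the crux is a strategic-closure lemma in the vein of \cite[Theorem~7.3]{SquarePaper}: the two-step iteration $\Add(\gamma,1)*\dot{\bT}(\dot{\vec{\mathcal{C}}})$, computed over the earlier model $\c^{+}_{\l}[G]$ (resp.\ $\c^{+}_{\l}[G,H]$), is ${<}\gamma$-strategically closed — the generic for $\Add(\gamma,1)$ is used to guide, level by level, the construction of a thread through the $\Add(\gamma,1)$-name $\dot{\vec{\mathcal{C}}}$, supplying the answers to the demands at limit stages where $\bT$ by itself is not closed. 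It is here that ``$\pmax$ preserves cofinality at least $\omega_{2}$'', and the resulting control over which subsets of $\gamma$ appear in the $\pmax$ extension $\c^{+}_{\l}[G]$, is used (this is also the content of Section~\ref{threadsec}).

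Granting the strategic-closure lemma, $\Add(\gamma,1)*\dot{\bT}(\dot{\vec{\mathcal{C}}})$ is a separative atomless ${<}\gamma$-strategically closed forcing of size at most $2^{{<}\gamma}=\gamma$ over the earlier model, hence forcing-equivalent to $\Add(\gamma,1)$ there; re-reading the actual $\Add(\gamma,1)$-generic through this equivalence exhibits, inside the intermediate model, a $\square(\gamma,\omega)$-sequence together with a club threading it. One then uses that a potential $\square(\gamma,\omega)$-sequence with $\cf(\gamma)>\aleph_{1}$ has at most $\aleph_{0}$ threads — so that threads are rigid and a thread in a mild extension reflects back — to push the thread onto $\vec{\mathcal{C}}$ itself (choosing the name $\dot{\vec{\mathcal{C}}}$ appropriately so that the re-read first coordinate recovers $\vec{\mathcal{C}}$), contradicting that $\vec{\mathcal{C}}$ was a genuine $\square(\gamma,\omega)$-sequence in $W$. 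I expect the main obstacle to be precisely this last pair of steps — the ${<}\gamma$-strategic closure of $\Add(\gamma,1)*\dot{\bT}(\dot{\vec{\mathcal{C}}})$ and the extraction of an honest thread through the originally given sequence from the absorption; by contrast, the reductions above, the $\MM^{++}(c)$ transfer, and the choice argument are bookkeeping over the stated facts about $\pmax$ and the closure of the $\Add$-forcings.
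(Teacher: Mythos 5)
Your reductions of the cardinal structure, the $\MM^{++}(\mathfrak{c})$ transfer, and the choice argument are broadly in the spirit of what the paper does (though the paper has to work considerably harder than ``bookkeeping'' to get $\DC_{\aleph_2}$ in $\c^+_\lambda[G]$ and $\DC_{\aleph_3}$ in $\c^+_\lambda[G,H]$, which is what makes the $\Add$ factors genuinely closed and makes $\lambda^\omega$ wellorderable of size $\aleph_4$). The genuine gap is in the square argument, and it is exactly the pair of steps you flag as ``the main obstacle.'' The absorption strategy --- show $\Add(\gamma,1)*\dot{\mathbb{T}}(\dot{\vec{\mathcal{C}}})$ is ${<}\gamma$-strategically closed, identify it with $\Add(\gamma,1)$, and re-read the actual generic --- cannot thread the \emph{given} sequence $\vec{\mathcal{C}}$. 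The forcing equivalence does not respect the projection to the first coordinate, so the re-read generic for the iteration has a first coordinate different from the one you actually forced with; it threads the realization of $\dot{\vec{\mathcal{C}}}$ by \emph{that} generic, not $\vec{\mathcal{C}}$ itself. No choice of name can repair this: if $\vec{\mathcal{C}}$ is a genuine $\square(\gamma,\omega)$-sequence in $W$ then by definition $W$ contains no thread through it, so no re-reading internal to $W$ can exhibit one. (If the step could be carried out, it would prove $\neg\square(\gamma,\omega)$ in every $\Add(\gamma,1)$-extension of every model, which is false.) The strategic-closure claim itself is also unjustified for an arbitrary name $\dot{\vec{\mathcal{C}}}$; it is a feature of the canonical forcing that adds a square sequence by initial segments, not of $\Add(\gamma,1)$ with an arbitrary name attached.

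The idea your proposal is missing is that the thread must be found in an \emph{outer} model, not by further forcing over the inner one. The paper's Theorem \ref{theorem:vanilla} works in $V[G][I]$ (where $I$ is the generic for $\bbQ=(\Add(\kappa,1)*\Add(\lambda,1))^{\c^+_\lambda[G]}$): there $\MM^{++}(\mathfrak{c})$ holds and $\gamma\in\{\kappa,\lambda\}$ has cofinality $\omega_2$ (this is where $\kappa,\lambda<\Theta^V=\omega_3^{V[G]}$ and Woodin's covering theorem enter), so Todorcevic's theorem threads the realization of the name \emph{already in} $V[G][I]$, with no extra forcing. Since the sequence has at most countably many threads and each $\mathcal{C}_\alpha$ is indexed in order type $\leq\omega$, some thread is the unique one through some member of some $\mathcal{C}_\alpha$, hence ordinal definable in $V$ from a parameter in $\lambda^\omega$, hence has a name in $\HOD^V_{\lambda^\omega}=\c^+_\lambda$ --- so the thread lies in $\c^+_\lambda[G][I]$ after all, the desired contradiction. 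Making $\bbQ$ eligible for this argument (i.e., ${<}\omega_2$-directed closed and of size $\leq\mathfrak{c}$ \emph{as seen from} $V[G]$) is precisely the content of the three bullet points at the start of Section \ref{threadsec}, which is where the strong-regularity and $\DC$ lemmas are consumed. Your proposal never leaves the small model for the threading step, and that is why it cannot close.
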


For the rest of the paper we fix $\kappa$, $\lambda$, $G$, $H$ and $K$ as in the statement of Theorem \ref{mainthrm}.
Since $\pmax \subseteq H(\aleph_{1})$, and $\kappa$ is both regular and equal to $\Theta^{\c^{+}_{\lambda}}$, \[(\pmax * \Add(\kappa, 1))^{L(\Delta_{\k})}\] is the same as
$(\pmax * \Add(\kappa, 1))^{\c^{+}_{\lambda}}$.
In addition the partial orders \[(\pmax*\Add(\k, 1)*\Add(\l, 1))^{\c_{\l}}\] and $(\pmax*\Add(\k, 1)*\Add(\l, 1))^{\c^{+}_{\l}}$ are the same, from which it follows that the theorem implies the corresponding version with $\bbC_{\lambda}$ in place of $\bbC^{+}_{\lambda}$.




\section{Threading coherent sequences}\label{threadsec}

The material in this section is adapted from \cite{SquarePaper}, and reduces (via Theorem \ref{theorem:vanilla}) the proof of Theorem \ref{mainthrm} to showing the following:
\begin{itemize}
  \item $\Add(\kappa, 1)*\Add(\lambda, 1)$ is $\omega_{2}$-closed in $\c^{+}_{\lambda}[G]$;
  \item $V[G] \models ((\Add(\kappa, 1)*\Add(\lambda, 1))^{\c^{+}_{\lambda}[G]})^{\omega_{1}} \subseteq \c^{+}_{\lambda}[G]$;
  \item $\c^{+}_{\lambda}[G, H] \models \DC_{\aleph_{3}}$.
\end{itemize}
The first of these follows from the fact that $\c^{+}_{\lambda}[G] \models \DC_{\aleph_{2}}$, which is shown in Lemma \ref{cpgdc2lem}.
The second is Lemma \ref{kaplamcllem}.
The third is Lemma \ref{dcomega_3lem}.
The first two facts show that the partial order $(\Add(\kappa, 1)*\Add(\lambda, 1))^{\c^{+}_{\lambda}[G]}$ satisfies, in $V[G]$, the conditions on the partial order $\bbQ$ from the statement of Theorem \ref{theorem:vanilla}.
This gives the failures of $\square(\omega_{3}, \omega)$ and $\square(\omega_{4}, \omega)$ in $\c^{+}_{\lambda}[G,H,K]$. The third
is used only to show that $\c^{+}_{\lambda}[G,H,K]$ is a model of $\ZFC$.


In order to apply Todorcevic's theorem to show that $\square(\omega_{3}, \omega)$ and $\square(\omega_{4}, \omega)$ fail, we need to show that $\kappa$ and $\lambda$ (from Theorem \ref{mainthrm}) have cofinality $\omega_{2}$ in
$V[G]$ (recall that they are less than $\Theta^{V}$, which is $\omega_{3}^{V[G]}$). To do this, we use the following covering theorem of Woodin from Section 3.1 of \cite{Woodin}.
The notion of {\em $A$-iterability} in the following theorem is introduced in
Woodin \cite[Definition 3.30]{Woodin}. Given $X\prec H(\omega_2)$, $M_X$ denotes its transitive
collapse.

\begin{theorem}[Woodin {\cite[Theorem 3.45]{Woodin}}] \label{thm:woodin}
Suppose that $M$ is a proper class inner model that contains all the reals and satisfies $\AD+\DC$.
Suppose that for any $A\in{\mathcal P}(\bbR)\cap M$, the set
$$ \{X\prec H(\omega_2)\mid \mbox{\rm$X$ is countable, and $M_X$ is $A$-iterable}\} $$
is stationary. Let $X$ in $V$ be a bounded subset of $\Theta^M$ of size $\omega_1$. Then there is a set
$Y\in M$, of size $\aleph_1$ in $M$, such that $X\subseteq Y$.
\end{theorem}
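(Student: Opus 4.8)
The plan is to code $\gamma$ by a set of reals in $M$, reflect via the stationarity hypothesis to a countable $A$-iterable structure, and then run Woodin's $\mathbb{R}$-genericity iteration to pull $X$ back into $M$.

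Since $X$ is bounded in $\Theta^M$, fix $\gamma<\Theta^M$ with $X\subseteq\gamma$; we may assume $\gamma\geq\omega_1$. Working inside $M$ — which satisfies $\AD$, so $\gamma$ is a surjective image of $\bbR$ there — fix a surjection $\rho\colon\bbR\to\gamma$ in $M$ and a set $A\in\cP(\bbR)\cap M$ coding $\rho$ (for instance $A$ a set of reals coding the prewellordering $\rho(r)\leq\rho(s)$ together with its norm), chosen so that $\rho$ is canonically recoverable from $A$; for this one uses the structure theory of $\AD$ inside $M$ to take $A$ in a pointclass with good definability and scale properties, so that $A$-iterability will transfer correctness not merely of $A$ as a set but of the norm $\rho$ along iterations. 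Fix also in $V$ a sequence $\langle r_\xi:\xi<\omega_1\rangle$ of reals with $\{\rho(r_\xi):\xi<\omega_1\}=X$.

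Apply the stationarity hypothesis to this $A$. Working in a large $H(\theta)$, build a continuous $\subseteq$-increasing chain $\langle P_\xi:\xi<\omega_1\rangle$ of countable elementary submodels with $A,\langle r_\xi:\xi<\omega_1\rangle\in P_0$, arranged (by standard bookkeeping, using that $\{N\prec H(\omega_2):M_N\text{ is }A\text{-iterable}\}$ is stationary, hence cofinal under $\subseteq$ among countable elementary substructures of $H(\omega_2)$) so that $N_\xi:=P_\xi\cap H(\omega_2)$ is for every $\xi$ a countable elementary submodel of $H(\omega_2)$ with $M_{N_\xi}$ $A$-iterable. The collapses $\pi_\xi\colon N_\xi\to M_{N_\xi}$ fix the reals of $N_\xi$, so $r_\eta\in M_{N_\xi}$ for $\eta<\xi$; write $\bar A_\xi$ and $\bar\gamma_\xi$ for the images of $A$ and $\gamma$. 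Now run Woodin's iteration simultaneously along the chain: build iterations $j_\xi\colon M_{N_\xi}\to M^*_\xi$ and commuting embeddings among the $M^*_\xi$ so that each $j_\xi$ witnesses $A$-iterability (hence $j_\xi(\bar A_\xi)=A\cap M^*_\xi$), the generic-ultrapower steps are steered so that $r_\xi\in\bbR^{M^*_{\xi+1}}$ and $M^*_{\xi+1}$ computes the $A$-norm of $r_\xi$ correctly, and limit stages are direct limits. Let $M^*=\varinjlim M^*_\xi$ and let $j\colon M_{N_0}\to M^*$ be the resulting iteration map.

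Put $Y:=j(\bar\gamma_0)$. Then $X\subseteq Y$: for $x_\xi=\rho(r_\xi)$, norm-correctness of $M^*_{\xi+1}$ at $r_\xi$ together with elementarity of the maps gives $x_\xi<j(\bar\gamma_0)$. Moreover $Y\in M$ and $|Y|^M=\aleph_1$: the entire construction of $j$ — a tree of generic ultrapowers over countable $A$-iterable models — can be performed inside $M$, because $M\models\DC$ and the iteration strategy in play is canonically definable from $A$ and the structures $M_{N_\xi}$ (this is the heart of $A$-iterability); hence $M^*$, and with it $Y$, lies in $M$, and $M$ sees $M^*$ as a direct limit of $\omega_1$-many structures each of size at most $\aleph_1$, so $|Y|^M\leq\aleph_1$, while $|Y|^M\geq|X|=\aleph_1$. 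The main obstacle is precisely this simultaneous iteration: one must interleave $\omega_1$-many generic ultrapowers so as to realize each prescribed real $r_\xi$ while maintaining correctness of the $A$-norm at these reals (which forces the careful choice of $A$ and uses the full strength of $A$-iterability rather than plain iterability), and at the same time keep the whole construction internal to $M$ so that the covering set lands in $M$ with cardinality $\aleph_1$. This is the content of Woodin's iteration machinery from Section 3.1 of \cite{Woodin}, and a complete argument must also track how much choice is available in $V$ to carry out the bookkeeping along the chain.
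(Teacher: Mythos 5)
The paper does not prove this statement: it is quoted verbatim as Theorem 3.45 of Woodin's book and used as a black box (in the proof of Lemma \ref{lemma:coflemma}), so there is no in-paper argument to compare yours against. Judged on its own terms, your sketch has the right general flavour for Woodin's covering argument --- code $\gamma$ by a prewellordering $A$ of $\bbR$, pass to $A$-iterable transitive collapses of countable elementary submodels of $H(\omega_2)$, and iterate so as to absorb $\omega_1$ many reals representing $X$ --- but it breaks down at the one step that is the actual content of the theorem, namely that the covering set lands in $M$.

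Your justification that ``$M^*$, and with it $Y$, lies in $M$'' because ``the entire construction of $j$ can be performed inside $M$'' cannot be right as stated. The chain $\langle N_\xi : \xi < \omega_1\rangle$ of countable elementary submodels of $H(\omega_2)^V$, the sequence of reals $\langle r_\xi : \xi < \omega_1\rangle$, and the $\omega_1$ many choices of generic filters are all made in $V$, using the stationarity hypothesis (a statement about $V$) and whatever choice is available there; each of these amounts to an $\omega_1$-sequence of pairwise distinct reals, and no such sequence exists in $M$, which satisfies $\AD$. So the direct limit $M^*$ you build is an object of $V$, not of $M$, and nothing in your argument produces a $Y$ that is an element of $M$, let alone one of $M$-cardinality $\aleph_1$. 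A repair must define the covering set inside $M$ from countable data --- for instance from a single real coding one $A$-iterable collapse $M_N$ together with $A$, say as the set of ordinals realized across all countable iterations of $M_N$ --- and then prove separately (this is where the real work lies) that this set has size $\aleph_1$ in $M$ and that a suitable length-$\omega_1$ iteration carried out in $V$ witnesses $X \subseteq Y$. Secondary issues: the simultaneous iteration of $\omega_1$ many distinct collapses $M_{N_\xi}$ with ``commuting embeddings'' is neither needed nor justified (a single $A$-iterable $M_N$ iterated $\omega_1$ times suffices, and you give no reason why iterations of different collapses should cohere), and the claim that a generic-ultrapower step can be steered so as to capture an arbitrary prescribed real of $V$ is itself a nontrivial lemma about these structures that you invoke without identifying the property that makes it true.
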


We apply Theorem \ref{thm:woodin} in the proof of Lemma \ref{lemma:coflemma} with $M$ as a model of the form $L(A,\bbR)$ for some $A \subseteq \breals$, and the $V$ of Theorem \ref{thm:woodin} as a $\pmax$ extension of $M$.



\begin{lemma} \label{lemma:coflemma}
Suppose that $M$ is a model of $\ZF + \AD^{+}$ and $\gamma$ is an ordinal of cofinality at least
$\omega_{2}$ in $M$.
Let $G_0\subset \pmax$ be an $M$-generic filter. Then $\gamma$ has cofinality at least $\omega_{2}$ in
$M[G_0]$.
\end{lemma}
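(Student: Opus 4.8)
The plan is to reduce the statement to Woodin's covering theorem (Theorem \ref{thm:woodin}) applied inside a suitable inner model of the $\pmax$ extension. Suppose toward a contradiction that $\gamma$ has cofinality $\omega_1$ in $M[G_0]$; since $\pmax$ is $\sigma$-closed, $\omega_1$ is not collapsed, so this means there is in $M[G_0]$ a cofinal $X\subseteq\gamma$ of order type $\omega_1$ (possibly $\omega_1$ itself if $\cf^{M[G_0]}(\gamma)=\omega_1$; the case $\cf^{M[G_0]}(\gamma)=\omega$ is impossible as $\pmax$ adds no $\omega$-sequences of ordinals, being $\sigma$-closed, so $\cf^M(\gamma)\geq\omega_2$ would be preserved to be $\geq\omega_1$ trivially and we only need to rule out $\omega_1$). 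Fix a $\pmax$-name $\dot X$ and a condition forcing that $\dot X$ is such a set.

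Next I would pass to an inner model to which Theorem \ref{thm:woodin} applies. Working in $M$, which satisfies $\ZF+\AD^+$, there is a set $A\subseteq\breals$ of Wadge rank large enough that $\dot X$ (coded as a subset of $\breals$ via a $\pmax$-name) and enough of the relevant structure lie in $L(A,\bbR)$; moreover, since $M\models\AD^+$, by the results quoted in the excerpt $M\models\DC$ and $L(A,\bbR)\models\AD^+$, hence $\AD+\DC$ in $L(A,\bbR)$. The stationarity hypothesis in Theorem \ref{thm:woodin} — that for each $B\in\cP(\bbR)\cap L(A,\bbR)$ the set of countable $X\prec H(\omega_2)$ with $M_X$ $B$-iterable is stationary — holds in a $\pmax$-extension of $L(A,\bbR)$; this is exactly the setup flagged in the paragraph following Theorem \ref{thm:woodin}, and is part of Woodin's analysis of $\pmax$ (it is the reason $\pmax$ extensions of $\AD^+$ models satisfy the needed iterability/absoluteness). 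So take $M_0=L(A,\bbR)^M$ and note $\gamma<\Theta^M$; we may choose $A$ so that $\gamma<\Theta^{M_0}$ and $\cf^{M_0}(\gamma)\geq\omega_2$ (e.g.\ absorb a cofinal map witnessing $\cf^M(\gamma)\geq\omega_2$, or rather its absence, into $A$). Let $G_1=G_0\cap\pmax^{M_0}$; since $\pmax\subseteq H(\aleph_1)$ and $\pmax^{M_0}$ is, by the arguments in the excerpt about $\pmax$ being the same computed in inner models containing the reals, an initial-segment-style subforcing, $G_1$ is $M_0$-generic and $X\in M_0[G_1]$ (here one uses $\cP(\omega_1)\subseteq L(\bbR)[G_0]$-style absoluteness: the new cofinal set lives over the $\pmax$ extension of a model of $\AD^+$ and is captured in $M_0[G_1]$ by choosing $A$ to code $\dot X$).

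Now apply Theorem \ref{thm:woodin} inside $V':=M_0[G_1]$: $M_0$ is a proper class inner model of $V'$ containing all reals and satisfying $\AD+\DC$, the stationarity hypothesis holds in $V'$, and $X\in V'$ is a bounded subset of $\Theta^{M_0}$ of size $\omega_1$. The theorem yields $Y\in M_0$ with $X\subseteq Y$ and $|Y|^{M_0}=\aleph_1$. But then $\sup Y=\sup X=\gamma$ (as $X$ is cofinal in $\gamma$ and $X\subseteq Y\subseteq\gamma$... wait, we need $Y\subseteq\gamma$, which we can arrange by intersecting with $\gamma$), so $\gamma$ has cofinality at most $|Y|=\aleph_1$ in $M_0$, contradicting $\cf^{M_0}(\gamma)\geq\omega_2$. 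This contradiction shows $\gamma$ retains cofinality $\geq\omega_2$ in $M[G_0]$, completing the proof.

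I expect the main obstacle to be the bookkeeping in choosing $A$: we need $L(A,\bbR)^M$ simultaneously (i) to contain the $\pmax$-name $\dot X$ and the witness that $\cf^M(\gamma)\geq\omega_2$ so that $\gamma$ is genuinely of cofinality $\geq\omega_2$ there, (ii) to satisfy the stationarity/iterability hypothesis of Theorem \ref{thm:woodin} after $\pmax$ forcing, and (iii) to have $G_0\cap\pmax^{L(A,\bbR)}$ generic with $X$ in the resulting extension. Items (ii) and (iii) are standard consequences of Woodin's $\pmax$ machinery (and are precisely what the remark after Theorem \ref{thm:woodin} anticipates), but making (i) and (iii) cohere — getting the new cofinal sequence into the small model's extension rather than only the big one's — requires care, essentially using that $\pmax$ is homogeneous enough and its conditions are reals, so the relevant name can be taken inside $L(A,\bbR)$ for $A$ of bounded Wadge rank.
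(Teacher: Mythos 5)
Your argument is essentially the paper's: both reduce the lemma to Woodin's covering theorem (Theorem \ref{thm:woodin}) applied with $L(A,\bbR)$ as the inner model and its $\pmax$ extension as the ambient universe, for $A\in\cP(\breals)\cap M$ of Wadge rank at least $\gamma$, concluding that any $\aleph_1$-sized subset of $\gamma$ in the extension is covered by an $\aleph_1$-sized set of the ground model and hence bounded. Two remarks. First, your aside ``note $\gamma<\Theta^{M}$'' is unjustified and false in general, so as written your proof does not cover ordinals with ${\rm cf}^{M}(\gamma)\geq\Theta^{M}$; the paper disposes of that case separately by observing that a regular $\gamma\geq\Theta^{M}$ admits no cofinal map from $\breals$ in $M$ and hence none in $M[G_0]$ (since $\pmax\subseteq H(\aleph_1)$), and then handles arbitrary $\gamma$ by passing to ${\rm cf}^{M}(\gamma)$. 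Second, the points you flag as the main obstacles are simpler than you fear: ${\rm cf}^{L(A,\bbR)}(\gamma)\geq\omega_2$ is automatic because $L(A,\bbR)\subseteq M$ (an inner model can only compute the cofinality to be at least as large), there is no need to absorb anything extra into $A$; the two models compute the same $H(\aleph_1)$, so $\pmax^{L(A,\bbR)}=\pmax^{M}$ and $G_0$ itself is $L(A,\bbR)$-generic; and $X\in L(A,\bbR)[G_0]$ follows directly from $|\gamma|\leq 2^{\aleph_0}$ in $M[G_0]$ together with $\cP(\omega_1)^{M[G_0]}\subseteq L(A,\bbR)[G_0]$ (the paper cites Woodin's Theorem 9.23 here), rather than from any manipulation of names.
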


\begin{proof}
Suppose first that $\gamma < \Theta^{M}$. Let $X$ be a subset of $\gamma$ of cardinality $\aleph_{1}$ in $M[G_0]$,
and let $A \in \cP(\breals) \cap M$ have Wadge rank at least $\gamma$. Since $|\gamma| \leq 2^{\aleph_{0}}$ in $M[G_{0}]$ and $\mathcal{P}(\omega_{1})^{M[G_0]}$ is contained in $L(A, \mathbb{R})[G]$ by Theorem 9.23 of \cite{Woodin},
$X$ is in $L(A, \mathbb{R})[G]$. By Theorem 9.32 of Woodin, the hypotheses of Theorem \ref{thm:woodin} are satisfied with $L(A, \bbR)$ as $M$ and $L(A, \bbR)[G]$ as $V$.
Applying Theorem \ref{thm:woodin} we have that $X$ is a subset of an element
of $L(A, \mathbb{R})$ of cardinality $\aleph_{1}$ in $L(A, \mathbb{R})$.

The lemma follows immediately from the previous paragraph for $\gamma$ of cofinality less than $\Theta^{M}$ in $M$.
If $\gamma \geq \Theta$ is regular in $M$ there is no cofinal function from $\breals$ to $\gamma$ in $M$, so there is no such
function in $M[G_0]$, either.
The theorem then follows for arbitrary $\gamma$.
\end{proof}

In conjunction with the facts mentioned at the beginning of this section, the following theorem (with $M_{1}$ as $V$, $M_{0}$ as $\bbC^{+}_{\lambda}$, $\gamma$ as either $\kappa$ or $\lambda$ and $\bbQ$ as $(\Add(\kappa,1) * \Add(\lambda,1))^{\bbC^{+}_{\lambda}[G]}$) completes the proof of Theorem \ref{mainthrm}.
The theorem and its proof are taken from \cite{SquarePaper}, except that the specific partial order used in \cite{SquarePaper} has been replaced with a more general class of partial orders.

\begin{theorem} \label{theorem:vanilla}
Suppose that $M_{1}$ is a model of $\ddagger$, and that for some set $X \in M_{1}$ containing
$\breals \cap M_{1}$, $M_{0} = \HOD^{M_{1}}_{X}$. Suppose also that  $\Theta^{M_{0}} < \Theta^{M_{1}}$ and that $\gamma \in [\Theta^{M_{0}}, \Theta^{M_{1}})$ has cofinality at least $\omega_{2}$ in $M_{1}$.
Let $G_0 \subset \pmax$ be $M_{1}$-generic, and let $I \subset \bbQ$ be
$M_{1}[G_0]$-generic, for some partial order $\bbQ \in M_{0}[G_0]$ which, in $M_{1}[G_0]$, is
$\less\omega_{2}$-directed closed and of cardinality at most $\mathfrak{c}$.
Then $\square(\gamma, \omega)$ fails in $M_{0}[G_0][I]$.
\end{theorem}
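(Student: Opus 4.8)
The plan is to argue by contradiction: suppose $\vec{C} = \langle C_\alpha : \alpha < \gamma\rangle$ were a $\square(\gamma,\omega)$-sequence in $M_0[G_0][I]$. Since $\gamma$ has cofinality at least $\omega_2$ in $M_1$ (hence in the outer model $M_1[G_0][I]$, as $\pmax$ preserves cofinality $\geq\omega_2$ by Lemma \ref{lemma:coflemma} and $\bbQ$ is ${<}\omega_2$-closed), any potential $\square(\gamma,\omega)$-sequence has at most countably many threads in $M_1[G_0][I]$ by the elementary fact quoted in Subsection \ref{cmjoinssec}. The heart of the argument is then to produce a thread through $\vec C$ in $M_1[G_0][I]$: this contradicts the non-threadability in $M_0[G_0][I]$ only if the thread fails to land in $M_0[G_0][I]$, so the real content is that $\vec C$ must have a thread in $M_0[G_0][I]$ itself, using the $\ADR$-based anti-threading machinery from \cite{SquarePaper}.

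More precisely, I would follow the strategy of \cite[Theorem 7.3]{SquarePaper}. First, observe $\vec C \in M_0[G_0][I]$ and $|\bbQ| \leq \mathfrak{c} = \aleph_2$ in $M_1[G_0]$, and $\Add(\kappa,1)$-style forcing; since $\bbQ \in M_0[G_0]$ is ${<}\omega_2$-directed closed there, and $\gamma \geq \Theta^{M_0}$, the point is that $M_0$ is a fine-structural/HOD-like model closed under sharps: because $M_0 = \HOD^{M_1}_X$ and $M_1 \models \ddagger$, we have $M_0 \models$ ``$\ADR + \Theta$ regular'' while also $M_0$ contains the sharp of $\cP(\bbR)^{M_0}$, so $M_0$ has enough determinacy and closure to run a covering/reflection argument on subsets of $\gamma$. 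The key step: given the $\square(\gamma,\omega)$-sequence, I would use the covering property (Theorem \ref{thm:woodin}, applied inside $M_0[G_0]$ with the relevant inner model $L(A,\bbR)$ capturing $\gamma$) together with the ${<}\omega_2$-closure of $\bbQ$ to show that $\vec C$ is, up to a thread, already definable over $M_0[G_0]$, and then that $M_0[G_0]$ (or a small forcing extension thereof) must contain a thread $E$. Since threads through a potential $\square(\gamma,\omega)$-sequence are preserved by further forcing (a thread remains a thread), $E$ threads $\vec C$ in $M_0[G_0][I]$, contradicting that $\vec C$ is a genuine $\square(\gamma,\omega)$-sequence there.

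The main obstacle is constructing the thread in the right model. One cannot simply take a thread from the outer model $M_1[G_0][I]$ — one must show the thread lives in $M_0[G_0][I]$. This is exactly where the hypothesis $\Theta^{M_0} < \Theta^{M_1}$ and $\gamma < \Theta^{M_1}$ are used: there is a set of reals $A \in M_1$ of Wadge rank above $\gamma$, so $\gamma$ is "captured" by $L(A,\bbR)^{M_1}$, and Woodin's covering theorem inside $L(A,\bbR)[G_0]$ lets one cover any $\omega_1$-sized subset of $\gamma$ by an $\aleph_1$-sized set in $L(A,\bbR)$. Combined with the directed-closure of $\bbQ$ (which prevents $I$ from adding new $\omega_1$-sequences, hence no new "potential obstructions" to threading at points of cofinality $\omega_1$), this forces the coherence data of $\vec C$ to be sufficiently absolute that a thread can be read off in $M_0[G_0]$ — using that $M_0[G_0] \models \DC_{\aleph_2}$ and the anti-square combinatorics of $\ADR$ as in \cite{SquarePaper}. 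I would spell out that the argument of \cite[Theorem 7.3]{SquarePaper} goes through verbatim once the two bullet points about $\bbQ$ (${<}\omega_2$-directed closure and size $\leq\mathfrak{c}$) are in place, since those were the only features of the specific forcing used there, and conclude the proof.
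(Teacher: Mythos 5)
There is a genuine gap — in fact two, at exactly the two load-bearing points of the argument. First, you never actually establish that the sequence has a thread in $M_{1}[G_{0}][I]$. The paper's route is: $M_{1}\models\ddagger$ gives $\MM^{++}(\mathfrak{c})$ and $\DC_{\aleph_{2}}$ in $M_{1}[G_{0}]$ (Woodin); $\bbQ$ being $\less\omega_{2}$-directed closed of size at most $\mathfrak{c}$ preserves $\MM^{++}(\mathfrak{c})$ to $M_{1}[G_{0}][I]$; and then Todorcevic's theorem (using that $\cof(\gamma)\geq\omega_{2}$ persists, via Lemma \ref{lemma:coflemma}) yields that every potential $\square(\gamma,\omega)$-sequence is threaded there. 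You never invoke $\MM^{++}(\mathfrak{c})$ or Todorcevic at this step; instead you try to extract the thread from Woodin's covering theorem (Theorem \ref{thm:woodin}). That theorem only covers $\omega_{1}$-sized bounded subsets of $\Theta^{L(A,\bbR)}$ by $\aleph_{1}$-sized sets of the inner model; it says nothing about threading coherent sequences, and in this paper it is used solely to prove the cofinality-preservation lemma. Without the $\MM^{++}(\mathfrak{c})$/Todorcevic step there is no thread to talk about.

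Second, even granting a thread in $M_{1}[G_{0}][I]$, you do not give the mechanism that places a thread inside $M_{0}[G_{0}][I]$, which is where the hypothesis $M_{0}=\HOD^{M_{1}}_{X}$ earns its keep. The paper fixes a name $\tau\in M_{0}$ for the sequence, ordinal definable in $M_{1}$ from some $S\in X$, arranged so that each $\mathcal{C}_{\alpha}$ is indexed in order type at most $\omega$. Since $\cof(\gamma)\geq\omega_{2}$ and $\DC_{\aleph_{1}}$ holds in the final model, there are at most countably many threads; hence some thread is the unique one extending the $n$-th member of some $\mathcal{C}_{\alpha}$, so it is ordinal definable from $\tau$, hence from $S$, and therefore has a name in $\HOD^{M_{1}}_{X}=M_{0}$. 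You cite the countably-many-threads fact but never use it, you never use the $\HOD_{X}$ hypothesis, and your substitute (``absoluteness of the coherence data'' via covering plus the closure of $\bbQ$) is not an argument. (Minor: your sentence ``this contradicts non-threadability only if the thread fails to land in $M_{0}[G_{0}][I]$'' has the logic reversed, though your subsequent intent is right.) Deferring to ``\cite[Theorem 7.3]{SquarePaper} goes through verbatim'' is fair as a citation, but the reconstruction you offer of that argument is not the argument, and the two steps above must be supplied.
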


\begin{proof}
Suppose that $\tau$ is a $\pmax * \dot{\bbQ}$-name in $M_{0}$ for a
$\square(\gamma,\omega)$-sequence. We may assume that the realization of $\tau$ comes with
an indexing of each member of the sequence in order type at most $\omega$. In $M_{0}$, $\tau$ is
ordinal definable from some $S \in X$.

By Theorems 9.35 and 9.39 of \cite{Woodin}, $\DC_{\aleph_{2}}$ and $\MM^{++}(\mathfrak{c})$ hold in $M_{1}[G_0]$.
By Lemma \ref{lemma:coflemma},
$\gamma$ has cofinality $\omega_{2}$ in $M_{1}[G_0]$.
Forcing with $\less\omega_{2}$-directed closed partial orders
of size at most $\mathfrak{c}$ preserves $\MM^{++}(\mathfrak{c})$ (see\cite{Larson:separating}). It follows then
that $\DC_{\aleph_{1}}$ and $\MM^{++} (\mathfrak{c})$ hold in the $\dot{\bbQ}_{G_0}$-extension of $M_{1}[G_0]$,
and thus that in this extension every potential $\square(\gamma, \omega)$-sequence is
threaded.

Let $\mathcal{C} = \langle \mathcal{C}_{\alpha} : \alpha < \gamma \rangle$ be the realization of
$\tau$ in the $\dot{\bbQ}_{G}$-extension of $M_{1}[G_0]$. Since $\gamma$ has cofinality at least $\omega_{2}$ in this
extension, which satisfies $\DC_{\aleph_{1}}$, $\mathcal{C}$ has at most $\omega$ many
threads, since otherwise one could find a $\mathcal{C}_{\alpha}$ in the sequence with uncountably
many members. Therefore, some member of some $\mathcal{C}_{\alpha}$ in the realization of $\tau$
will be extended by a unique thread through the sequence, and since the realization of $\tau$ indexes
each $\mathcal{C}_{\alpha}$ in order type at most $\omega$, there is in $M_{1}$ a name, ordinal
definable from $S$, for a thread through the realization of $\tau$. This name is then a member of
$M_{0} = \HOD^{M_{1}}_{X}$.
\end{proof}

\section{Proving $\DC_{\aleph_{m}}$}

As stated at the beginning of Section \ref{threadsec}, two of our three remaining tasks are showing that $\c^{+}_{\lambda}[G] \models \DC_{\aleph_{2}}$ and $\c^{+}_{\lambda}[G,H] \models \DC_{\aleph_{3}}$. Section \ref{dcreducesec} reduces each of these to the case of relations on $\lambda^{\omega}$. In Section \ref{dcproveoutlinessec} we outline our strategy for proving that $\DC_{\aleph_{2}}$ holds in $\bbC^{+}_{\lambda}[G]$ for relations on $\lambda^{\omega}$. A proof of $\c^{+}_{\lambda}[G,H] \models \DC_{\aleph_{3}}$ (using essentially the same strategy) is given in Section \ref{dc3sec}.

\subsection{Reducing to $\DC_{\aleph_{m}}\restrict \lambda^{\omega}$}\label{dcreducesec}

Lemma \ref{dcreducelem} is applied in this paper in the cases $m=2$ and $m=3$ (recall that, as we have defined it, $\DC_{\aleph_{m}}$ implies $\DC_{\aleph_{k}}$ for all $k \leq m$). Since (by the theorem of Solovay cited in Subsection \ref{cmjoinssec}), $\ddagger$ implies $\DC$, the lemma also shows (in the case $m=0$) that $\DC$ holds in $\bbC^{+}_{\lambda}$.


\begin{lemma}\label{dcreducelem} Let $\bbP$ be a partial order in $\bbC^{+}_{\lambda}$, and let $I \subseteq \bbP$ be a $\bbC^{+}_{\lambda}$-generic filter. Let $m$ be an element of $\omega$ such that $\DC_{\aleph_{k}}$ holds in $\bbC^{+}_{\lambda}[I]$ for all $k < m$.
Suppose also that, in $\bbC^{+}_{\lambda}[I]$,  every $\less\omega_{m}$-closed tree  on $\lambda^{\omega}$ of height $\omega_{m}$ has a cofinal branch.
Then $\DC_{\aleph_{m}}$ holds in $\bbC^{+}_{\lambda}[I]$.
\end{lemma}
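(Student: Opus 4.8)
The plan is to prove $\DC_{\aleph_m}$ in $\bbC^+_\lambda[I]$ by reducing an arbitrary $\omega_m$-closed relation to one on $\lambda^\omega$, using the $\DC_{\aleph_k}$ hypotheses for $k<m$ to handle the construction of the reduction itself. So suppose $R$ is an $\omega_m$-closed binary relation on some set $Y \in \bbC^+_\lambda[I]$; we must produce an $R$-chain $f\colon\omega_m\to Y$. The key observation is that $\bbC^+_\lambda[I]$ is (a generic extension of) a $\HOD$-style model, so every set is ordinal-definable from a real together with a (bounded) sequence of ordinals, and in particular $Y$ admits a surjection $\pi\colon \lambda^\omega \to Y$ in $\bbC^+_\lambda[I]$ — here one uses that $\bbC^+_\lambda = \HOD_{\gamma^\omega}$ and that $\lambda$ is large enough (indeed $\lambda = \kappa^+$ in $\c^+_\lambda$ and $\kappa = \Theta^{\c_\lambda}$) for $\lambda^\omega$ to surject onto the relevant portion of the model; the forcing $\bbP \in \bbC^+_\lambda$ and $\bbP \subseteq \bbC^+_\lambda$ does not disturb this. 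Presumably this surjection claim is where one has to be slightly careful, and may already be available from \cite{Changmodels_1} or from the basic structure theory of these Chang models; I would cite it rather than reprove it.

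Given $\pi\colon \lambda^\omega \twoheadrightarrow Y$, define a relation $R'$ on $\lambda^\omega$ by $x \mathrel{R'} y$ iff $\pi(x) \mathrel R \pi(y)$. The natural move is to build an $R'$-chain of length $\omega_m$ and push it forward by $\pi$. But $R'$ need not be $\omega_m$-closed in the literal sense (given an $R'$-chain of length $\delta<\omega_m$, its $\pi$-image is an $R$-chain, which has an $R$-upper bound $r\in Y$, but we need a $\pi$-preimage of $r$, which exists), so $R'$ \emph{is} $\omega_m$-closed once we note $\pi$ is surjective. Now I want to realize "build an $R'$-chain of length $\omega_m$'' as "find a cofinal branch through an $\omega_m$-closed tree on $\lambda^\omega$ of height $\omega_m$'', which is exactly the hypothesis of the lemma. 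The tree $T$ consists of all $R'$-chains of length $<\omega_m$ (equivalently, functions $s\colon \delta\to\lambda^\omega$ with $\delta<\omega_m$ and $s(\alpha)\mathrel{R'}s(\beta)$ for $\alpha<\beta<\delta$), ordered by end-extension; a cofinal branch through $T$ is exactly an $R'$-chain of length $\omega_m$. One checks $T$ is $\omega_m$-closed as a tree: given an increasing sequence of nodes of lengths cofinal in some $\delta<\omega_m$, their union is a node provided $\delta<\omega_m$, which holds; the only subtlety is taking the union of a chain of nodes of limit length $\delta$, where we need the union to still be a node — it is, being itself an $R'$-chain of length $\delta<\omega_m$.

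However, $T$ is a tree whose \emph{nodes} are sequences from $\lambda^\omega$, not elements of $\lambda^\omega$ — so to invoke the hypothesis verbatim I must code nodes of $T$ by single elements of $\lambda^\omega$. Since every node has length $<\omega_m \le \lambda$ and is a sequence of elements of $\lambda^\omega$, and since $\lambda^\omega$ is closed under appropriate pairing, a node of length $\delta < \omega_m$ can be coded by an element of $(\lambda^\omega)^\delta$; but $\DC_{\aleph_k}$ for $k<m$ is exactly what lets us simultaneously choose codes coherently along branches, or — more simply — we only ever need the tree to have height $\omega_m$ with nodes coded in $\lambda^\omega$, and for $\delta<\omega_m\le\lambda$ there is a bijection $\delta\times\omega\cong\omega\cdot\delta\hookrightarrow\lambda$... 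I would instead replace $\lambda^\omega$ by the set of functions $\omega_m\to\lambda$ restricted to sets of size $\le\aleph_k$, but the cleanest route is: nodes of length $\delta<\omega_m$ correspond to functions $\delta\cdot\omega\to\lambda$, i.e. elements of $\lambda^{\delta\cdot\omega}$, and since $\delta<\omega_m$ the ordinal $\delta\cdot\omega<\omega_m$ too, so a node of length $<\omega_m$ from $\lambda^\omega$ is literally an element of $\lambda^{<\omega_m}$; using $\DC_{\aleph_{m-1}}$ (or rather, just a fixed bijection) we identify the tree on $\lambda^\omega$ (height $\omega_m$, $\omega_m$-closed) whose cofinal branch yields the desired $R'$-chain, hence (via $\pi$) the desired $R$-chain $f\colon\omega_m\to Y$. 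The main obstacle, and the place the proof in the paper presumably spends its effort, is this bookkeeping: verifying that the tree coding nodes-as-sequences-of-reals into single reals can be carried out inside $\bbC^+_\lambda[I]$ while preserving both "$\omega_m$-closed'' and "height $\omega_m$'', and that the lower $\DC_{\aleph_k}$'s are exactly what bridges the gap between an abstract $\omega_m$-closed relation and a tree on $\lambda^\omega$. I would structure it as: (1) get $\pi$; (2) transfer $R$ to $R'$ on $\lambda^\omega$; (3) form the tree $T$ of $R'$-chains and check closure and height using the $\DC_{\aleph_k}$'s; (4) apply the tree hypothesis to get a branch; (5) push forward by $\pi$.
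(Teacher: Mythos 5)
Your overall architecture (code the field of the relation into $\lambda^\omega$, form the tree of coded chains, apply the branch hypothesis, push the branch forward) matches the paper's, but step (1) contains a genuine gap: a set $Y \in \bbC^{+}_{\lambda}[I]$ need \emph{not} be a surjective image of $\lambda^{\omega}$. Since $\bbC^{+}_{\lambda} = \HOD_{\lambda^{\omega}}$ is an inner model containing all the ordinals, the field of an $\omega_{m}$-closed relation can be, say, $V_{\gamma}^{\bbC^{+}_{\lambda}[I]}$ for large $\gamma$, or $\cP(\cP(\lambda^{\omega}))$, onto which $\lambda^{\omega}$ cannot surject for Cantor/Hartogs reasons. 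What is true is that every element of $\bbC^{+}_{\lambda}[I]$ is the realization of a $\bbP$-name which is ordinal definable in some $V_{\gamma}$ from an element of $\lambda^{\omega}$, i.e., there is a definable surjection from (a subset of) $\omega \times \gamma \times \lambda^{\omega}$ onto $Y$ --- but the ordinal coordinate ranges up to a $\gamma$ that $\lambda^{\omega}$ cannot absorb, so this does not yield your $\pi$, and a choice-free L\"{o}wenheim--Skolem reduction to a small subfield is not available. This is exactly where the paper's proof does its work: it builds the tree $T'$ directly on $\lambda^{\omega}$ and, when decoding a node $\langle x_{\alpha} : \alpha < \beta \rangle$, recursively assigns to $x_{\eta}$ the realization $t_{n,\delta,x_{\eta},I}$ for the \emph{least} pair $(n,\delta) \in \omega \times \gamma$ producing a legal continuation of the chain decoded so far. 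Because $\omega \times \gamma$ is wellordered, this least-witness decoding needs no choice and eliminates the ordinal parameters without coding them into $\lambda^{\omega}$. Without something like this, your argument only proves $\DC_{\aleph_{m}}$ restricted to relations on surjective images of $\lambda^{\omega}$, which is weaker than the lemma.

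Two smaller points. First, ``tree on $\lambda^{\omega}$'' here means a tree of sequences from $\lambda^{\omega}$ ordered by end-extension, so your paragraph about recoding nodes (which are sequences) as single elements of $\lambda^{\omega}$ is solving a non-problem; the paper's $T'$ is literally a set of sequences $\langle x_{\alpha} : \alpha < \beta \rangle$ with each $x_{\alpha} \in \lambda^{\omega}$. Second, the hypotheses $\DC_{\aleph_{k}}$ for $k < m$ are not bookkeeping for the coding: their role is to guarantee that the tree you build actually has height $\omega_{m}$, i.e., has nodes of every length $\delta < \omega_{m}$, since producing such a node amounts to running dependent choice of length $\delta$ inside the tree.
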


\begin{proof}
Fix a $\less\omega_{m}$-closed tree $T$ in $\bbC^{+}_{\lambda}[I]$. Fix an ordinal $\gamma$ such that every node of $T$ is the realization of a $\bbP$-name which is ordinal definable in $V_{\gamma}$ from some element of $\lambda^{\omega}$.
Given $(n,\delta, x) \in \omega \times \gamma \times \lambda^{\omega}$, let \[t_{n,\delta, x}\] be the set defined in $V_{\gamma}$ from $\delta$ and $x$ by the formula with G\"{o}del number $n$.

Let $T'$ be the tree of sequences $\langle x_{\alpha} : \alpha < \beta\rangle$ (for some $\beta < \omega_{m}$) for which there exists a sequence \[\langle y_{\alpha} : \alpha < \beta \rangle\] such that, for each $\eta < \beta$, \[y_{\eta} = t_{n,\delta, x_{\eta}, I},\] where $(n, \delta) \in (\omega, \gamma)$ is minimal such that  $t_{n,\delta,x_{\eta}}$ is a $\bbP$-name and
\[\langle y_{\alpha} : \alpha  < \eta \rangle^{\frown} \langle t_{n,\delta,x_{\eta},I}\rangle \in T.\]

Then $T'$ is also $\less\omega_{m}$-closed, and an $\omega_{m}$-chain through $T'$ induces one through $T$.
\end{proof}

\subsection{Proving $\DC_{\aleph_{2}}| \lambda^{\omega}$}\label{dcproveoutlinessec}

To show that $\DC_{\aleph_{2}} \restrict \lambda^{\omega}$ holds in $\bbC^{+}_{\lambda}[G]$, we show that the following statements hold in $\bbC^{+}_{\lambda}[G]$:

\begin{itemize}
\item there is no cofinal map from $\omega_{2}$ to $\lambda$;

\item there is no cofinal map from $(\gamma^{\omega})^{\beta}$ to $\lambda$, for any $\gamma < \lambda$ and $\beta < \omega_{2}$ (it suffices to show this for $\gamma = \kappa$ and $\beta = \omega_{1}$).
\end{itemize}

The first of these follows from Lemma \ref{strongreglem} with $b$ as $\omega_{2} \times \pmax$. The second is shown in the proof of Lemma \ref{cpgdc2lem}, whose statement is just the desired statement that $\bbC^{+}_{\lambda}[G] \models \DC_{\aleph_{2}}$. These two facts imply that every cardinal $\delta \leq \aleph_{2}$ and each $\delta$-closed relation $R$ on $\lambda^{\omega}$ in $\bbC^{+}_{\lambda}[G]$, there exists a $\gamma < \lambda$ such that $R \cap \gamma^{\omega}$ is also $\delta$-closed. Since $\lambda = \kappa^{+}$, it suffices then (once we have established the two facts above) to consider trees on $\kappa^{\omega}$.
To show that, in $\bbC^{+}_{\lambda}[G]$, every $\omega_{2}$-closed tree on $\kappa^{\omega}$ has a cofinal branch, we use the fact (which follows from standard $\pmax$ arguments) that the following statements hold in $\bbC^{+}_{\lambda}[G]$:

\begin{itemize}
\item there is no cofinal map from $\omega_{2}$ to $\kappa$ (because $\kappa = \Theta^{\bbC^{+}_{\lambda}}$ is regular in $\bbC^{+}_{\lambda}$ and $\pmax \subseteq H(\aleph_{1})$);

\item there is no cofinal map from $(\gamma^{\omega})^{\beta}$ to $\kappa$, for any $\gamma < \kappa$ and $\beta < \omega_{2}$ (because $\kappa = \omega_{3}^{\bbC^{+}_{\lambda}[G]}$ and $\aleph_{2}^{\aleph_{1}} = \aleph_{2}$).
\end{itemize}
These facts imply that it suffices to consider $\omega_{2}$-closed trees on $\gamma^{\omega}$ for any $\gamma < \kappa$.
Since each such $\gamma^{\omega}$ is a surjective image of the wellordered set $\breals$ in $\bbC^{+}_{\lambda}[G]$,
$\bbC^{+}_{\lambda}[G]$ satisfies the statement that each such tree has a cofinal branch.


\section{Strong regularity of $\lambda$}

In this section we prove a regularity property of $\lambda$ in $\bbC^{+}_{\lambda}$ and derive several consequences, including the fact that $\DC_{\aleph_{2}}$ holds in $\bbC^{+}_{\lambda}[G]$. We will refer to the property of $\lambda$ established in Lemma \ref{strongreglem} as \emph{strong regularity}.

\begin{lemma}\label{strongreglem} Whenever
$b\in \bbC^-_{\lambda}$ and $f \colon b \to \lambda$ is in $\bbC_{\lambda}^+$,  there exists a $\gamma<\lambda$ such that $f[b]\subseteq \gamma$.
\end{lemma}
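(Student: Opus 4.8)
The plan is to exploit the fact that $\bbC^+_\lambda = \HOD_{\lambda^\omega}$ computes $\lambda$ as $\kappa^+$ together with $\cf(\lambda) = \kappa$, while $\bbC^-_\lambda = L_\lambda(\cH, \lambda^\omega)$ satisfies $\bbC^-_\lambda \cap \powerset(\bR) = \Delta_\kappa$, so that $\bbC^-_\lambda$ sees ``only'' the reals and their sets of Wadge rank below $\kappa$. The first step is to reduce to the case where $b$ is (coded by) a bounded subset of $\lambda$, or more precisely where $b \in \bbC^-_\lambda$ is an element whose transitive closure has size at most $\kappa$ in the relevant sense: since $\lambda = \kappa^+$ in $\bbC^+_\lambda$ and (by $\Join_\lambda$, via $\kappa \le \cf(\lambda)$) every element of $\bbC^-_\lambda$ appears at some stage $L_\alpha(\cH\restrict\lambda, \lambda^\omega)$ with $\alpha < \lambda$, each $b \in \bbC^-_\lambda$ lies in some such $L_\alpha$ of size $\le \kappa$ in $\bbC^+_\lambda$; one then replaces $b$ by a surjection from $\kappa$ (or from $\kappa^\omega$, or ultimately from $\bR$) onto $b$, which exists in $\bbC^+_\lambda$, and it suffices to bound the image of $f$ composed with that surjection.

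The second step is the heart of the matter: given $f \colon \kappa^\omega \to \lambda$ (or $f\colon \bR \to \lambda$) in $\bbC^+_\lambda$, show $f$ is bounded. Here I would argue that $f[\kappa^\omega]$ cannot be cofinal in $\lambda$ because $\lambda$ has cofinality $\kappa$ (not $\le \kappa^\omega$) in $\bbC^+_\lambda$ — wait, that is not quite enough since $|\kappa^\omega|$ could be large; instead the point must be that $f$, being in $\HOD_{\lambda^\omega}$, is ordinal-definable from a single element of $\lambda^\omega$, hence (after the reduction) essentially from a single element of $\kappa^\omega$ together with an ordinal parameter. Using the fourth clause of $\Join_\lambda$, namely $\powerset(\kappa^\omega) \cap \bbC_\lambda = \powerset(\kappa^\omega) \cap \bbC^+_\lambda$, such an $f$ is already an element of $\bbC_\lambda = L(\cH\restrict\lambda, \lambda^\omega)$; and then, since $\bbC_\lambda \cap \powerset(\bR) = \Delta_\kappa$ and $\kappa = \Theta^{\bbC_\lambda}$, there is no surjection from $\bR$ (and a fortiori no cofinal map from any wellorderable-by-$\bR$ set) onto any ordinal $\ge \kappa$ inside $\bbC_\lambda$; so $f$, viewed as a partial map with wellorderable domain, cannot reach cofinally into $\lambda$. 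More carefully: the range $f[b]$ is a subset of $\lambda$ of size $\le |\bR|$ in $\bbC_\lambda$, and since $\Theta^{\bbC_\lambda} = \kappa$ and $\lambda = \kappa^+$ there, any such set is bounded below $\kappa^+ = \lambda$; that is the desired $\gamma$.

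I would carry out the steps in this order: (i) observe $b$ appears below stage $\lambda$ in the $\bbC^-_\lambda$-hierarchy and fix in $\bbC^+_\lambda$ a surjection $e \colon \bR \to b$ (possible since $|b| \le \kappa$ and, under $\Join_\lambda$, $\bR$ surjects onto $\kappa$ and onto $\kappa^\omega$); (ii) replace $f$ by $g = f \circ e \colon \bR \to \lambda$, noting $g \in \bbC^+_\lambda$ and it suffices to bound $g[\bR] = f[b]$; (iii) use $\powerset(\kappa^\omega)\cap\bbC_\lambda = \powerset(\kappa^\omega)\cap\bbC^+_\lambda$ (coding $g$ as a subset of $\bR \times \lambda$, then pushing down via a wellorder of $\bR$ from $\Join_\lambda$, ultimately as a subset of an appropriate $\gamma^\omega$ or of $\kappa^\omega$) to conclude $g \in \bbC_\lambda$; (iv) apply $\Theta^{\bbC_\lambda} = \kappa$ and $\bbC^+_\lambda \models \lambda = \kappa^+$ (which transfers to $\bbC_\lambda$ by the agreement of the two models on $\powerset(\kappa^\omega)$ and hence on cardinals up to $\kappa^+$) to get that $g[\bR]$ is a bounded subset of $\lambda$, yielding $\gamma$. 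The main obstacle I anticipate is step (iii): carefully coding an arbitrary $f \in \HOD_{\lambda^\omega}$ with domain in $\bbC^-_\lambda$ as a \emph{set of reals-length sequences} lying in $\powerset(\kappa^\omega)$, so that the agreement clause of $\Join_\lambda$ can be invoked — this requires that the ordinal-definability parameter for $f$ can be absorbed, which is where I would lean on $\kappa$ being a limit member of the Solovay sequence (so $L(\Delta_\kappa) \models \ADR$) and on $\cf(\lambda) = \kappa$ to handle parameters $< \lambda$ one coordinate at a time.
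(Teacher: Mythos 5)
There is a genuine gap at the very first step, and it propagates through the rest of your argument. You propose to fix a surjection $e \colon \bR \to b$ in $\bbC^+_\lambda$, ``possible since $|b| \le \kappa$ and $\bR$ surjects onto $\kappa$.'' But $\kappa = \Theta^{\bbC^+_\lambda}$, so by definition there is \emph{no} surjection from $\bR$ onto $\kappa$ inside $\bbC^+_\lambda$ (nor onto any $\beta \in [\kappa,\lambda)$, nor onto $\kappa^\omega$); also ``$|b|\le\kappa$'' is not meaningful here since $b$ is typically not wellorderable in $\bbC^+_\lambda$. The same error recurs at the end, where you bound $f[b]$ as ``a subset of $\lambda$ of size $\le |\bR|$'' with ordertype below $\Theta^{\bbC_\lambda}=\kappa$: that conclusion is exactly what fails when $b$ is not a surjective image of $\bR$. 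Your detour through $\bbC_\lambda$ via the $\powerset(\kappa^\omega)$-agreement clause, and the parameter-absorption problem you flag in step (iii), are additional complications that your route cannot easily resolve and that turn out to be unnecessary.

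The paper's proof repairs precisely this point with a two-stage regularity argument. One fixes $\beta<\lambda$ with $b\in\bbC^-_\beta$ and a surjection $h\colon\kappa\to\beta$ (available since $\lambda=\kappa^+$ in $\bbC_\lambda$), and uses definability over $\bbC^-_\beta$ from $h\circ y$ and $\cH\restrict\beta$ to obtain a surjection $g\colon\kappa^\omega\to b$. One then writes $\kappa^\omega=\bigcup_{\alpha<\kappa}\alpha^\omega$ (using $\cf(\kappa)>\omega$). Each $\alpha^\omega$ with $\alpha<\kappa$ \emph{is} a surjective image of $\bR$ in $\bbC^+_\lambda$, so $f[g[\alpha^\omega]]$ has ordertype below $\Theta^{\bbC^+_\lambda}=\kappa$ and is therefore bounded in the regular cardinal $\lambda$; finally, the union of these $\kappa$-many bounded pieces is bounded by a second application of the regularity of $\lambda$ in $\bbC^+_\lambda$ (from $\Join_\lambda$). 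Your proposal is missing exactly this decomposition of $\kappa^\omega$ into $\bR$-size pieces, which is the key idea of the lemma.
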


\begin{proof} Let $\beta<\lambda$ be such that $b\in \bbC^{-}_{\beta}$.
Since
$\lambda = \kappa^{+}$ in $\bbC_{\lambda}$ there exists a surjection $h\colon \kappa\to \beta$ in $\bbC_{\lambda}$ (recall that $\bbC^{+}_{\lambda}$ and $\bbC_{\lambda}$ have the same subsets of $\kappa$).
Let $B$ be the set of $y \in \kappa^\omega$ such that $b$ has a member definable in $\bbC^{-}_{\beta}$ from
$h \circ y$ and $\cH \restrict \beta$.
Then $B$ induces a surjection $g \colon \kappa^{\omega} \to b$ in $\bbC_{\lambda}$.

Since $\kappa = \Theta^{\bbC^{+}_{\lambda}}$ is regular, the ordertype of $f[g[\alpha^{\omega}]]$ less than $\kappa$ for each $\alpha < \kappa$.
Since $\lambda$ is regular in $\bbC^{+}_{\lambda}$, $f[g[\alpha^{\omega}]]$ is a bounded subset of $\lambda$, for each $\alpha < \kappa$.
Again applying the regularity of $\lambda$ in $\bbC^{+}_{\lambda}$, $f[b]$ is bounded in $\lambda$.

\end{proof}

It follows immediately from Lemma \ref{strongreglem} that there is no cofinal map from $\omega_{2}$ to $\lambda$ in $\bbC^{+}_{\lambda}[G]$.
As noted in Section \ref{dcproveoutlinessec}, this reduces our proof that $\bbC^{+}_{\lambda}[G] \models \DC_{\aleph_{2}}$ to showing that there is no
cofinal map from $\kappa^{\omega_{1}}$ to $\lambda$ in $\bbC^{+}_{\lambda}[G]$. We will in fact show that $\kappa^{\omega_{1}} \cap \bbC^{+}_{\lambda}[G] \in \bbC^{-}_{\lambda}[G]$, which will suffice, by Lemma \ref{strongregghlem} below, which shows that the strong regularity of $\lambda$ persists to $\bbC^{+}_{\lambda}[G, H]$.
Lemmas \ref{cmpscllem} and \ref{dkcontlem} use the strong regularity of $\lambda$ to prove closure properties of $\bbC^{-}_{\lambda}$.
The proof of Lemma \ref{cmpscllem} is similar to the proof of Lemma \ref{strongreglem}.

\begin{lemma}\label{cmpscllem}
For all $b\in \bbC^-_{\lambda}$, $\cP(b) \cap \bbC^{+}_{\lambda} \subseteq \bbC^{-}_{\lambda}$.
\end{lemma}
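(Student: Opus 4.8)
The plan is to mimic the proof of Lemma \ref{strongreglem}, using strong regularity of $\lambda$ to push a putative subset of $b$ lying in $\bbC^+_\lambda$ down to some bounded level $\bbC^-_\beta$. Fix $b \in \bbC^-_\lambda$ and $A \subseteq b$ with $A \in \bbC^+_\lambda$; we want $A \in \bbC^-_\lambda$. As in Lemma \ref{strongreglem}, fix $\beta < \lambda$ with $b \in \bbC^-_\beta$. Since $\lambda = \kappa^+$ in $\bbC_\lambda$, fix a surjection $h \colon \kappa \to \beta$ in $\bbC_\lambda$ (using that $\bbC^+_\lambda$ and $\bbC_\lambda$ have the same subsets of $\kappa$, and in fact the same bounded subsets of $\lambda$). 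Let $B \subseteq \kappa^\omega$ be the set of $y$ such that $b$ has a member definable in $\bbC^-_\beta$ from $h\circ y$ and $\cH\restrict\beta$, so $B$ induces a surjection $g \colon \kappa^\omega \to b$ lying in $\bbC_\lambda$.

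Now, since $A \in \bbC^+_\lambda = \HOD_{\lambda^\omega}$, the set $A$ is ordinal-definable in $\bbC^+_\lambda$ from some parameter $s \in \lambda^\omega$, and by strong regularity (Lemma \ref{strongreglem}) $s$ may be taken to be an element of some $\gamma^\omega$ with $\gamma < \lambda$, hence coded as an element of $\kappa^\omega$ after composing with a surjection $\kappa \to \gamma$ in $\bbC_\lambda$. Thus the function $c \colon b \to 2$ which is the characteristic function of $A$ is, via $g$, the image of a function $\tilde c \colon \kappa^\omega \to 2$ that is ordinal-definable in $\bbC^+_\lambda$ from a real-like parameter in $\kappa^\omega$ together with the parameters $h$, $\cH\restrict\beta$ defining $g$. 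The key point is that $\kappa = \Theta^{\bbC^+_\lambda}$ and, by $\Join_\lambda$, $\bbC^-_\lambda \cap \powerset(\bbR) = \bbC^+_\lambda \cap \powerset(\bbR) = \Delta_\kappa$, together with $\powerset(\kappa^\omega)\cap\bbC_\lambda = \powerset(\kappa^\omega)\cap\bbC^+_\lambda$: so any subset of $\kappa^\omega$ (in particular the graph of $\tilde c$, suitably coded) that lies in $\bbC^+_\lambda$ already lies in $\bbC_\lambda$, and indeed in $\bbC^-_\lambda$ since it is bounded in the construction. Pulling back along $g$, which itself lies in $\bbC^-_\lambda$ (built from $h, \cH\restrict\beta$ at some bounded stage), we recover $A = g[\,\tilde c^{-1}\{1\}\,] \in \bbC^-_\lambda$.

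The step I expect to be the main obstacle is the bookkeeping that identifies the relevant object as literally a \emph{subset of $\kappa^\omega$} so that the $\Join_\lambda$ clauses $\powerset(\kappa^\omega)\cap\bbC_\lambda = \powerset(\kappa^\omega)\cap\bbC^+_\lambda$ and $\bbC^-_\lambda \cap \powerset(\bbR) = \Delta_\kappa$ apply, and then that it lies in $\bbC^-_\lambda$ rather than merely $\bbC_\lambda$. Concretely: the characteristic function of $A$ composed with $g$ is determined by which $y \in \kappa^\omega$ land in $\tilde c^{-1}\{1\}$, and this set is definable over $\bbC^+_\lambda$ (equivalently over $L(\Delta_\kappa)$, since both compute the same subsets of $\kappa^\omega$) from a member of $\kappa^\omega$; one must check this definition is absolute enough to be evaluated inside $\bbC^-_\lambda$, i.e. that no sharps or other objects outside $\bbC^-_\lambda$ are needed — here one uses precisely that $\bbC^-_\lambda \cap \powerset(\bbR) = \bbC^+_\lambda \cap \powerset(\bbR)$, so $\bbC^-_\lambda$ already "sees" all the determinacy/Wadge structure required. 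Once the coded set of reals is seen to belong to $\bbC^-_\lambda$, reconstructing $A$ inside $\bbC^-_\lambda$ is routine, since $g$, $h$, $\beta$ and $\cH\restrict\beta$ are all available there.
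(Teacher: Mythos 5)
Your proposal is correct and follows essentially the same route as the paper's proof: fix $\beta<\lambda$ with $b\in\bbC^-_\beta$ and a surjection $h\colon\kappa\to\beta$, code the subset $a\subseteq b$ by a subset of $\kappa^\omega$ (the paper uses the set $B_a$ of pairs $(x,n)$ such that some member of $a$ is definable over $\bbC^-_\beta$ from $h\circ x$ and $\cH\restrict\beta$ via formula $n$, which is your $g^{-1}[A]$ up to bookkeeping), apply the $\Join_\lambda$ clause $\powerset(\kappa^\omega)\cap\bbC^+_\lambda\subseteq\bbC_\lambda$, and then descend from $\bbC_\lambda$ to $\bbC^-_\lambda$ by a reflection argument using strong regularity. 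The detour through an ordinal-definability parameter $s\in\lambda^\omega$ is unnecessary, and the last step is better phrased as the strong-regularity reflection you invoke at the outset than as an absoluteness issue, but neither point affects correctness.
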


\begin{proof}
Fix $b \in \bbC^{-}_{\lambda}$ and a $\beta<\lambda$ such that
$b\in \bbC^{-}_{\beta}$. Let
$h \colon \kappa \to \beta $ be a surjection in $\bbC_{\lambda}$.
Fix $a\in \powerset(b)\cap \bbC^+_{\lambda}$  and let $B_{a}$ be the set of $(x,n) \in \kappa^\omega \times \omega$ such that some member of $a$ is definable over $\bbC^{-}_{\beta}$ from $h \circ x$ and $\cH \restrict \beta$ via the formula with G\"{o}del number $n$.
Then $B_{a} \in \bbC^{+}_{\lambda}$. Since $\cP(\kappa^{\omega}) \cap \bbC^{+}_{\lambda} \subseteq \bbC_{\lambda}$, $B_{a}\in \bbC_{\lambda}$, so $a\in \bbC_{\lambda}$. A reflection argument using the strong regularity of $\lambda$ shows that $a \in \bbC^{-}_{\lambda}$.
\end{proof}

The following lemma implies that $(\pmax * \Add(\kappa, 1))^{\bbC^{+}_{\lambda}} \in \bbC^{-}_{\lambda}$ and
$(\pmax * \Add(\kappa, 1) * \Add(\lambda, 1))^{\bbC^{+}_{\lambda}} \subseteq \bbC^{-}_{\lambda}$.

\begin{lemma}\label{dkcontlem} $\Delta_{\kappa} \in \bbC^{-}_{\lambda}$
\end{lemma}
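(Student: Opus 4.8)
The plan is to show that $\Delta_\kappa$ is constructed at a bounded stage of the hierarchy $\langle \c^{-}_{\beta} : \beta < \lambda\rangle$ defining $\c^{-}_{\lambda} = L_\lambda(\cH,\lambda^{\omega})$; that is, to find an $\eta < \lambda$ with $\Delta_\kappa \subseteq \c^{-}_{\eta}$. Since $\Join_\lambda$ gives $\c^{-}_{\lambda} \cap \powerset(\bR) = \Delta_\kappa$, once such an $\eta$ is found we have $\Delta_\kappa = \{A \in \c^{-}_{\eta} : A \subseteq \bR\}$, which is definable over $\c^{-}_{\eta}$ and hence lies in $\c^{-}_{\eta+1} \subseteq \c^{-}_{\lambda}$. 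For $a \in \c^{-}_{\lambda}$ write $\rho(a)$ for the least $\beta$ with $a \in \c^{-}_{\beta+1}$; since $\langle \c^{-}_{\beta} : \beta < \lambda\rangle$ is ordinal definable from $\lambda^{\omega}$ and so lies in $\c^{+}_{\lambda}$, the function $\rho$ is a function in $\c^{+}_{\lambda}$. The engine of the argument is the strong regularity of $\lambda$ (Lemma \ref{strongreglem}): to bound $\rho$ over a subset $D$ of $\c^{-}_{\lambda}$ it suffices to produce a set $b \in \c^{-}_{\lambda}$ and a surjection $s \colon b \to D$ with $s \in \c^{+}_{\lambda}$, for then $\rho \circ s \colon b \to \lambda$ lies in $\c^{+}_{\lambda}$ and Lemma \ref{strongreglem} bounds its range, which contains $\rho[D]$. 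In each application below $D$ will moreover be of the form $\{a \in \c^{-}_{\eta} : \varphi(a)\}$ for a simple $\varphi$, so that $D \in \c^{-}_{\eta+1} \subseteq \c^{-}_{\lambda}$.

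First I would treat $\powerset(\gamma)$ for a fixed $\gamma < \kappa$. By Lemma \ref{cmpscllem} (applied with $b = \gamma \in \c^{-}_{\lambda}$), the models $\c^{-}_{\lambda}$, $\c_{\lambda}$ and $\c^{+}_{\lambda}$ have the same subsets of $\gamma$. Since $\gamma < \kappa = \Theta^{\c^{+}_{\lambda}}$ and $\c^{+}_{\lambda} \models \AD$, the Coding Lemma, applied inside $\c^{+}_{\lambda}$, produces a surjection $s_\gamma \colon \bR \to \powerset(\gamma) \cap \c^{+}_{\lambda}$ with $s_\gamma \in \c^{+}_{\lambda}$. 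As $\bR \in \c^{-}_{\lambda}$, strong regularity gives a least $\beta_\gamma < \lambda$ with $\powerset(\gamma) \cap \c^{-}_{\lambda} \subseteq \c^{-}_{\beta_\gamma}$, whence $\powerset(\gamma) \cap \c^{-}_{\lambda} \in \c^{-}_{\beta_\gamma+1}$. The map $\gamma \mapsto \beta_\gamma$ is a function from $\kappa$ to $\lambda$ in $\c^{+}_{\lambda}$, and $\kappa \in \c^{-}_{\lambda}$, so a second application of strong regularity gives $\beta^{*} < \lambda$ bounding all the $\beta_\gamma$. Hence $P := \bigcup_{\gamma<\kappa}(\powerset(\gamma)\cap\c^{-}_{\lambda}) \subseteq \c^{-}_{\beta^{*}}$, and so $P = \{a \in \c^{-}_{\beta^{*}} : \exists \gamma < \kappa\ (a \subseteq \gamma)\} \in \c^{-}_{\beta^{*}+1} \subseteq \c^{-}_{\lambda}$.

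Next I would pass from $P$ to $\Delta_\kappa$ using Suslin representations. Since $\c^{+}_{\lambda} \models \ADR + \DC$ (the first is recorded in Subsection \ref{pmaxssec}, the second as noted around Lemma \ref{dcreducelem}), every subset of $\bR$ is Suslin in $\c^{+}_{\lambda}$; and since $\Theta^{\c^{+}_{\lambda}} = \kappa$ is regular, every Suslin set in $\c^{+}_{\lambda}$ is $\gamma$-Suslin for some infinite $\gamma < \kappa$, witnessed by a tree on $\omega\times\gamma$ in $\c^{+}_{\lambda}$ whose image under a fixed definable coding $(\omega\times\gamma)^{<\omega} \to \gamma$ is a subset of $\gamma$, hence an element of $P$ by the previous paragraph. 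Let $\sigma \colon P \to \powerset(\bR)$ send $x$ to $p[T_x]$ when $x$ codes a tree $T_x$ on the appropriate $\omega\times\gamma$, and to $\emptyset$ otherwise; then $\sigma \in \c^{+}_{\lambda}$, and $\sigma[P] \supseteq \Delta_\kappa$ by the above, while $\sigma[P] \subseteq \Delta_\kappa$ since each $p[T_x]$, computed inside $\c_{\lambda}$, is a subset of $\bR$ in $\c_{\lambda}$ and so lies in $\Delta_\kappa$. Thus $\sigma$ is a surjection from $P \in \c^{-}_{\lambda}$ onto $\Delta_\kappa$, so strong regularity gives $\eta < \lambda$ with $\Delta_\kappa \subseteq \c^{-}_{\eta}$, and then $\Delta_\kappa = \c^{-}_{\eta} \cap \powerset(\bR) \in \c^{-}_{\eta+1} \subseteq \c^{-}_{\lambda}$, as desired.

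I expect the main obstacle to be the two facts underlying the surjections: that $\powerset(\gamma) \cap \c^{-}_{\lambda}$ is a surjective image of $\bR$ for each $\gamma < \kappa$ (the Coding Lemma, which needs $\gamma < \Theta^{\c^{+}_{\lambda}}$ — this is precisely why one cannot work with $\powerset(\kappa)$ in one stroke, and must instead split over $\gamma < \kappa$ and reglue with a second use of strong regularity), and that every member of $\Delta_\kappa$ carries a Suslin code that is a bounded subset of $\kappa$, so that all these codes already lie in $P$. The remaining points — that $\langle\c^{-}_{\beta}:\beta<\lambda\rangle$, $\rho$, the $s_\gamma$ and $\sigma$ genuinely lie in $\c^{+}_{\lambda} = \HOD_{\lambda^{\omega}}$, and the "same subsets of $\gamma$" transfers used to bring the Suslin codes down into $\c^{-}_{\lambda}$ — are routine given Lemma \ref{cmpscllem} and $\Join_\lambda$.
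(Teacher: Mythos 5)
Your argument is correct, but it takes a genuinely different and noticeably heavier route than the paper's. Both proofs run on the same engine: parametrize $\Delta_\kappa$ by a set already known to lie in $\bbC^{-}_{\lambda}$, and apply the strong regularity of $\lambda$ (Lemma \ref{strongreglem}) to the function sending members of $\Delta_\kappa$ to the first stage of the $\bbC^{-}$-hierarchy at which they appear. They differ in the parametrization. The paper indexes by Wadge rank: for $\alpha<\kappa$ let $f(\alpha)$ be the least $\beta$ such that \emph{some} set of Wadge rank $\alpha$ lies in $\bbC^{-}_{\beta}$ (well-defined by Lemma \ref{cmpscllem} with $b=\bbR$, since $\Join_\lambda$ gives $\cP(\bbR)\cap\bbC^{+}_{\lambda}=\Delta_\kappa$); strong regularity applied to $f\colon\kappa\to\lambda$ bounds its range, and then every set of Wadge rank at most $\alpha$, being a continuous preimage of the chosen representative or of its complement, is definable from that representative and a real and so appears a step later. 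You instead index $\Delta_\kappa$ by Suslin codes, which forces you first to prove the auxiliary fact that the bounded subsets of $\kappa$ form a set $P\in\bbC^{-}_{\lambda}$ (itself two applications of strong regularity plus the Coding Lemma), and then to invoke the Becker--Woodin theorem that under $\ADR+\DC$ every set of reals is Suslin, together with the regularity of $\Theta^{\bbC^{+}_{\lambda}}=\kappa$ to keep the Suslin cardinal below $\kappa$. All of these inputs are available in the paper's setting, so your proof goes through; the one step worth making explicit is the absoluteness of $p[T_x]$ between $\bbC^{+}_{\lambda}$ and $V$ (immediate, since $\lambda^{\omega}\subseteq\HOD_{\lambda^{\omega}}$ means the two models have the same branches through $T_x$), which is what makes $\sigma[P]\supseteq\Delta_\kappa$ literally true. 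Your detour does yield a byproduct of independent interest, namely $P\in\bbC^{-}_{\lambda}$; but for the lemma itself the paper's Wadge-rank argument is a three-line instance of the same idea and needs no Suslin theory at all.
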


\begin{proof} For each $\alpha < \kappa$ let $f(\alpha)$ be the least $\beta < \lambda$ such that there is a set of reals of Wadge rank $\alpha$ in $\bbC^{-}_{\beta}$. By Lemma \ref{cmpscllem}, $f$ is well-defined.
By the strong regularity of $\lambda$ in $\bbC^{+}_{\lambda}$, the range of $f$ is bounded below $\lambda$.
It follows then that $\Delta_{\kappa} \in \bbC^{-}_{\eta}$ for any $\eta < \lambda$ containing the range of $f$.
\end{proof}

Since $(\pmax * \Add(\kappa, 1))^{\bbC^{+}_{\lambda}} \in \bbC^{-}_{\lambda}$, we get the following lemma, which implies that $\lambda$ is regular in $\bbC^{+}_{\lambda}[G,H]$.

\begin{lemma}\label{strongregghlem} Whenever
$b\in \bbC^-_{\lambda}[G,H]$ and
\[f \colon b \to \lambda\] is in $\bbC_{\lambda}^{+}[G,H]$,  there exists a $\gamma<\lambda$ such that $f[b]\subseteq \gamma$.
\end{lemma}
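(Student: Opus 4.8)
### Proof plan for Lemma \ref{strongregghlem}

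The plan is to mimic the proof of Lemma \ref{strongreglem}, replacing $\bbC_\lambda^{+}$, $\bbC_\lambda^{-}$ and $\bbC_\lambda$ by their $(G,H)$-extensions throughout, and to make this work by using the facts already established that $(\pmax * \Add(\kappa,1))^{\bbC^{+}_{\lambda}} \in \bbC^{-}_{\lambda}$ (from Lemma \ref{dkcontlem}) and $\cP(\kappa^{\omega}) \cap \bbC^{+}_{\lambda} \subseteq \bbC_{\lambda}$ (part of $\Join_\lambda$). Concretely, first I would fix $b \in \bbC^{-}_{\lambda}[G,H]$ and $f \colon b \to \lambda$ in $\bbC^{+}_{\lambda}[G,H]$, and fix a $\bbP$-name $\dot b$ for $b$ and a $\bbP$-name $\dot f$ for $f$, where $\bbP = (\pmax * \Add(\kappa,1))^{\bbC^{+}_{\lambda}}$; by Lemma \ref{dkcontlem} and the remark following it we may take $\dot b \in \bbC^{-}_{\beta}$ for some $\beta < \lambda$ and $\dot f \in \bbC^{+}_{\lambda}$, and indeed $\bbP \in \bbC^{-}_{\lambda}$. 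Enlarging $\beta$ if necessary, arrange that $\bbP, \dot b \in \bbC^{-}_{\beta}$ and that $\bbC^{-}_\beta$ already "sees" $\cH\restrict\beta$; since $\lambda = \kappa^{+}$ in $\bbC_\lambda$ (and $\bbC_\lambda$, $\bbC^{+}_\lambda$ have the same subsets of $\kappa$) fix a surjection $h \colon \kappa \to \beta$ in $\bbC_\lambda$.

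Next I would build, exactly as in Lemma \ref{strongreglem}, a surjection from $\kappa^{\omega}$ onto $b$ inside $\bbC_\lambda[G,H]$: let $B$ be the set of $y \in \kappa^{\omega}$ (in the sense of $\bbC^{+}_\lambda[G,H]$) such that $\dot b$ has a member forced (by the relevant condition coded by $h\circ y$) to be definable over $\bbC^{-}_\beta$ from $h\circ y$, $\cH\restrict\beta$ and the generic $(G,H)$. The point is that $B$ is a subset of $\kappa^{\omega}$ lying in $\bbC^{+}_\lambda[G,H]$, so by Lemma \ref{cmpscllem} applied in the extension — or more carefully, by the fact that $\bbP\in\bbC^{-}_\lambda$ and $\cP(\kappa^{\omega})\cap\bbC^{+}_\lambda\subseteq\bbC_\lambda$, pulled through the forcing by a name argument — we get $B \in \bbC_\lambda[G,H]$ and, by a reflection argument using the strong regularity of $\lambda$ in $\bbC^{+}_\lambda$ (Lemma \ref{strongreglem}) applied to the names, $B \in \bbC^{-}_\lambda[G,H]$. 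Thus $B$ induces a surjection $g\colon\kappa^{\omega}\to b$ in $\bbC^{-}_\lambda[G,H]$.

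Finally I would run the two-step boundedness argument verbatim. Since $\kappa = \Theta^{\bbC^{+}_\lambda} = \omega_3^{\bbC^{+}_\lambda[G]}$ is regular in $\bbC^{+}_\lambda[G,H]$ (here using that $\Add(\kappa,1)$ is $\less\kappa$-closed and standard $\pmax$ facts), for each $\alpha < \kappa$ the ordertype of $f[g[\alpha^{\omega}]]$ is less than $\kappa$; and since $\lambda$ is regular in $\bbC^{+}_\lambda[G,H]$ (which is Lemma \ref{dkcontlem} plus the remark, or can be taken as part of the induction), $f[g[\alpha^{\omega}]]$ is bounded in $\lambda$ for each $\alpha<\kappa$, and one final application of the regularity of $\lambda$ in $\bbC^{+}_\lambda[G,H]$ gives that $f[b] = \bigcup_{\alpha<\kappa} f[g[\alpha^{\omega}]]$ is bounded in $\lambda$, as desired.

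The main obstacle is the middle step: transferring the conclusions "$\cP(\kappa^{\omega})\cap\bbC^{+}_\lambda\subseteq\bbC_\lambda$" and Lemma \ref{cmpscllem} from the ground model $\bbC^{+}_\lambda$ to its $(G,H)$-extension. This requires that the forcing $\bbP = (\pmax*\Add(\kappa,1))^{\bbC^{+}_\lambda}$ belongs to $\bbC^{-}_\lambda$ (known, from Lemma \ref{dkcontlem} and the remark after Lemma \ref{strongreglem}) and a careful name-coding argument: every subset of $\kappa^{\omega}$ in $\bbC^{+}_\lambda[G,H]$ has a $\bbP$-name which is itself (essentially) a subset of $\kappa^{\omega}\times\bbP$ in $\bbC^{+}_\lambda$, hence lies in $\bbC_\lambda$ by $\Join_\lambda$, and then a reflection argument with the strong regularity of $\lambda$ (as in Lemmas \ref{cmpscllem} and \ref{dkcontlem}) places the realization in $\bbC^{-}_\lambda[G,H]$. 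Once that bookkeeping is in place, the rest is the same regularity-chasing as in Lemma \ref{strongreglem}.
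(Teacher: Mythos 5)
Your concluding step is circular. You twice invoke the regularity of $\lambda$ in $\bbC^{+}_{\lambda}[G,H]$, but that is exactly the instance of the lemma in which $b$ is an ordinal below $\lambda$; the paper states this lemma precisely in order to \emph{conclude} that $\lambda$ is regular in $\bbC^{+}_{\lambda}[G,H]$, so it is not available in advance, and neither Lemma \ref{dkcontlem} nor the surrounding remarks supply it. The parenthetical ``can be taken as part of the induction'' does not help: no induction is set up, and the natural way to break the circle --- proving the ordinal case first by a name argument --- already proves the full lemma and makes the rest of your construction unnecessary. A secondary issue: in $\bbC^{+}_{\lambda}[G,H]$ the ordinal $\kappa$ is no longer $\Theta$ of the model, so the claim that $f[g[\alpha^{\omega}]]$ has ordertype less than $\kappa$ cannot be run ``verbatim'' as in Lemma \ref{strongreglem}; one needs instead $|\alpha^{\omega}| \leq \aleph_{2}^{\aleph_{0}} = \aleph_{2} < \kappa$ in the extension, and even then the conclusion that such a set is bounded in $\lambda$ presupposes a lower bound on $\cf(\lambda)$ there, which is again the point at issue.

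The argument the paper intends is the name-lifting that you describe only as a tool for ``the main obstacle,'' applied directly to $f$ rather than to a coding of $b$; it makes the surjection $g \colon \kappa^{\omega} \to b$ and the transfer of Lemma \ref{cmpscllem} to the extension unnecessary. Let $\bbP = (\pmax * \Add(\kappa,1))^{\bbC^{+}_{\lambda}}$, fix names $\dot b \in \bbC^{-}_{\lambda}$ and $\dot f \in \bbC^{+}_{\lambda}$ for $b$ and $f$, and let $N = \{\dot x : \exists p\, (p,\dot x) \in \dot b\}$, which lies in $\bbC^{-}_{\lambda}$ since $\dot b$ does. Define $F \colon \bbP \times N \to \lambda$ in $\bbC^{+}_{\lambda}$ by letting $F(p, \dot x)$ be the ordinal $\xi$ such that $p \forces \dot f(\dot x) = \check\xi$ when $p$ decides this, and $0$ otherwise. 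Since $\bbP \in \bbC^{-}_{\lambda}$ (the remark preceding the lemma, via Lemmas \ref{cmpscllem} and \ref{dkcontlem}), the domain $\bbP \times N$ is in $\bbC^{-}_{\lambda}$, so Lemma \ref{strongreglem} gives a $\gamma < \lambda$ with $F[\bbP \times N] \subseteq \gamma$; as every value of $f$ is a value of $F$, we get $f[b] \subseteq \gamma$. No property of $\kappa$ or $\lambda$ in the extension is needed.
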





\begin{lemma}\label{cpgdc2lem} $\bbC^{+}_{\lambda} \models \DC_{\aleph_{2}}$
\end{lemma}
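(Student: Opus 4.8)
We first record that the statement as displayed should read $\bbC^{+}_{\lambda}[G] \models \DC_{\aleph_{2}}$, the form in which it is quoted in Section~\ref{dcproveoutlinessec}. The model $\bbC^{+}_{\lambda}$ itself satisfies $\AD$ and so cannot satisfy even $\DC_{\aleph_{1}}$: eventual dominance on $\breals$ is $\less\omega_{1}$-closed, yet any $\less\omega_{1}$-chain of length $\omega_{1}$ through it would be an injection of $\omega_{1}$ into $\breals$, which $\AD$ precludes. We therefore explain how to prove $\bbC^{+}_{\lambda}[G] \models \DC_{\aleph_{2}}$.

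The plan is to apply Lemma~\ref{dcreducelem} with $\bbP = \pmax$, $I = G$, and $m = 2$; as $G$ is generic for $(\pmax)^{\bbC^{+}_{\lambda}}$ it is in particular $\bbC^{+}_{\lambda}$-generic, so the lemma applies. This reduces matters to (i) $\DC_{\aleph_{0}}$ and $\DC_{\aleph_{1}}$ in $\bbC^{+}_{\lambda}[G]$, and (ii) the statement that every $\less\omega_{2}$-closed tree on $\lambda^{\omega}$ of height $\omega_{2}$ has a cofinal branch in $\bbC^{+}_{\lambda}[G]$. For (i), $\DC = \DC_{\aleph_{0}}$ is preserved by the $\sigma$-closed forcing $\pmax$, and $\DC_{\aleph_{1}}$ follows from Lemma~\ref{dcreducelem} at $m = 1$, whose tree hypothesis at height $\omega_{1}$ is treated exactly as (ii) below, one level down.

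For (ii) I would follow Section~\ref{dcproveoutlinessec} and push the underlying set of the tree from $\lambda^{\omega}$ down to $\gamma^{\omega}$ for some $\gamma < \kappa$. The first descent rests on two regularity facts for $\lambda$ in $\bbC^{+}_{\lambda}[G]$: there is no cofinal map $\omega_{2} \to \lambda$, which follows from Lemma~\ref{strongreglem} applied to $b = \omega_{2} \times \pmax \in \bbC^{-}_{\lambda}$ (using $\pmax \in \bbC^{-}_{\lambda}$, a consequence of Lemma~\ref{dkcontlem}); and there is no cofinal map $(\kappa^{\omega})^{\omega_{1}} \to \lambda$. Together with $\lambda = \kappa^{+}$ these replace $\lambda^{\omega}$ by $\kappa^{\omega}$, and the parallel facts for $\kappa$ (that $\kappa = \Theta^{\bbC^{+}_{\lambda}}$ is regular, equal to $\omega_{3}^{\bbC^{+}_{\lambda}[G]}$, together with $\aleph_{2}^{\aleph_{1}} = \aleph_{2}$) then replace $\kappa^{\omega}$ by $\gamma^{\omega}$ for some $\gamma < \kappa$. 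Since $\gamma < \kappa = \Theta^{\bbC^{+}_{\lambda}}$, the set $\gamma^{\omega}$ is a surjective image of $\breals$, which is wellordered in $\bbC^{+}_{\lambda}[G]$ because $\pmax$ forces $2^{\aleph_{0}} = \aleph_{2}$; hence $\gamma^{\omega}$ is wellorderable there, and a $\less\omega_{2}$-closed tree on a wellordered set of height $\omega_{2}$ has a cofinal branch by the usual $\ZF$ recursion choosing least upper bounds.

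The main obstacle is the second regularity fact above. As in Section~\ref{dcproveoutlinessec} I would prove the stronger statement $\kappa^{\omega_{1}} \cap \bbC^{+}_{\lambda}[G] \in \bbC^{-}_{\lambda}[G]$ and then apply the strong regularity of $\lambda$ in $\bbC^{+}_{\lambda}[G, H]$ (Lemma~\ref{strongregghlem}), which bounds the range of any $f \colon \kappa^{\omega_{1}} \to \lambda$ lying in $\bbC^{+}_{\lambda}[G]$. To place these $\omega_{1}$-sequences inside $\bbC^{-}_{\lambda}[G]$ I would code each $s \colon \omega_{1} \to \kappa$ as a subset of $\kappa$ and show $\cP(\kappa) \cap \bbC^{+}_{\lambda}[G] \subseteq \bbC^{-}_{\lambda}[G]$: a $\pmax$-name for a subset of $\kappa$ amounts to a subset of $\kappa \times \pmax$, which by Lemma~\ref{cmpscllem} (with $b = \kappa \times \pmax \in \bbC^{-}_{\lambda}$, invoking $\cP(\kappa^{\omega}) \cap \bbC^{+}_{\lambda} \subseteq \bbC_{\lambda}$ from $\Join_{\lambda}$) already lies in $\bbC^{-}_{\lambda}$, so its realization lies in $\bbC^{-}_{\lambda}[G]$. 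The points that will require care are checking that $\bbC^{-}_{\lambda}[G]$ and $\bbC^{+}_{\lambda}[G]$ have the same $\omega_{1}$-sequences from $\kappa$ (so that this set is an element, and not merely a subset, of $\bbC^{-}_{\lambda}[G]$), and sequencing the argument so that the $m = 1$ instance is completed before $\DC_{\aleph_{1}}$ is needed in the $m = 2$ application of Lemma~\ref{dcreducelem}.
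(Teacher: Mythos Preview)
Your overall plan is the paper's: reduce via Lemma~\ref{dcreducelem} to trees on $\lambda^{\omega}$, descend as in Section~\ref{dcproveoutlinessec} to $\gamma^{\omega}$ for some $\gamma < \kappa$, isolating as the one nontrivial point the membership $(\kappa^{\omega_{1}})^{\bbC^{+}_{\lambda}[G]} \in \bbC^{-}_{\lambda}[G]$, after which Lemma~\ref{strongregghlem} finishes. You are also right that the displayed statement should carry the $[G]$.

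The one place your route diverges, however, does not go through as written, and your own caveat already locates the problem. Coding $s \colon \omega_{1} \to \kappa$ by a subset of $\kappa$ and invoking Lemma~\ref{cmpscllem} shows only that each individual name lies \emph{somewhere} in $\bbC^{-}_{\lambda}$. Since $\lambda = \kappa^{+}$ in $\bbC^{+}_{\lambda}$, subsets of $\kappa$ (hence their $\pmax$-names in $\cP(\kappa \times \pmax)$) appear at cofinally many levels of $\bbC^{-}_{\lambda}$, so the family of names is not an element of $\bbC^{-}_{\lambda}$; and you cannot appeal to strong regularity to bound the levels, because the domain of the rank function would be exactly the set whose membership is in question.

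The paper's resolution is short and uses an ingredient you did not invoke: $\kappa$ is regular in $V$ (it is a regular member of the Solovay sequence by $\Join_{\lambda}$), so $\cof(\kappa) > \omega_{1}$ persists to $\bbC^{+}_{\lambda}[G]$. Every $s \in (\kappa^{\omega_{1}})^{\bbC^{+}_{\lambda}[G]}$ therefore has range bounded below some $\gamma < \kappa$, and a $\pmax$-name for such an $s$ is coded by a set of reals in $\Delta_{\kappa}$. Since $\Delta_{\kappa}$ is a single element of $\bbC^{-}_{\lambda}$ by Lemma~\ref{dkcontlem}, the entire collection of names sits inside $\bbC^{-}_{\lambda}$, and $(\kappa^{\omega_{1}})^{\bbC^{+}_{\lambda}[G]} \in \bbC^{-}_{\lambda}[G]$ follows.
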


\begin{proof} By the remarks in Section \ref{dcproveoutlinessec}, and Lemma \ref{strongreglem}, it suffices to show that there is no cofinal map
from $\kappa^{\omega_{1}}$ to $\lambda$ in $\bbC^{+}_{\lambda}[G]$. Since $\kappa$ is regular in $V$, $\cof(\kappa) > \omega_{1}$ in $\bbC^{+}_{\lambda}[G]$, so every element of $\kappa^{\omega_{1}} \cap \bbC^{+}_{\lambda}[G]$ is the realization of a name coded by a set of reals.
Since \[\Delta_{\kappa} = \cP(\bbR) \cap \bbC^{+}_{\lambda} \in \bbC^{-}_{\lambda}\] by Lemma \ref{dkcontlem}, $(\kappa^{\omega_{1}})^{\bbC^{+}_{\lambda}[G]} \in \bbC^{-}_{\lambda}[G]$. A reference to Lemma \ref{strongregghlem} then completes the proof.
\end{proof}







\section{$\omega_{1}$-closure in $V[G]$}





In this section we show that, in $V[G]$, $\bbC^{+}_{\lambda}[G]$ is closed under $\lambda$-sequences from
$(\Add(\kappa, 1) * \Add(\lambda, 1))^{\bbC^{+}_{\lambda}[G]}$, which is the second statement from the beginning of Section \ref{threadsec}. This Lemma \ref{kaplamcllem} below, which follows from Lemma \ref{omega1 functions}.

\begin{lemma}\label{omega1 functions} In $V[G]$, for each $b\in \bbC^-_{\lambda}[G]$,
$b^{\omega_1}\in \bbC^{-}_{\lambda}[G]$.
\end{lemma}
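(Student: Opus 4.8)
The plan is to show that $\bbC^{-}_{\lambda}[G]$ is closed under $\omega_1$-sequences of its own elements, working in $V[G]$, by exploiting the $\sigma$-closure of $\pmax$ together with the strong regularity of $\lambda$ established in Lemma \ref{strongreglem} (and its persistence through the $\pmax$ extension via Lemma \ref{strongregghlem}). First I would fix $b \in \bbC^{-}_{\lambda}[G]$ and $\beta < \lambda$ with $b \in \bbC^{-}_{\beta}[G]$, and reduce to the case where $b$ is (coded by) a subset of $\kappa^{\omega}$: using the surjection $\kappa \to \beta$ available in $\bbC_{\lambda}$ and arguing as in Lemma \ref{strongreglem}, there is a surjection $g \colon \kappa^{\omega} \to b$ in $\bbC_{\lambda}[G]$, hence in $\bbC^{-}_{\lambda}[G]$ after a reflection argument. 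It then suffices to show $(\kappa^{\omega})^{\omega_1} \cap \bbC^{-}_{\lambda}[G]$ — or more precisely the set of $\omega_1$-sequences from $b$ — lies in $\bbC^{-}_{\lambda}[G]$.

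Next, I would analyze a typical element $s = \langle b_\alpha : \alpha < \omega_1 \rangle \in (b^{\omega_1})^{\bbC^{+}_{\lambda}[G]}$. Since $\pmax$ is $\sigma$-closed, forcing with it adds no new $\omega$-sequences, and since $\cof(\kappa) > \omega_1$ in $\bbC^{+}_{\lambda}[G]$ (as $\kappa = \Theta^{\bbC^{+}_{\lambda}}$ is regular in $\bbC^{+}_{\lambda}$ and $\pmax \subseteq H(\aleph_1)$), each such $s$ is really an $\omega_1$-sequence of elements of a set coded by a set of reals; thus $s$ is the realization of a $\pmax$-name coded by a subset of $\kappa^{\omega}$ — i.e., by an element of $\Delta_\kappa$. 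By Lemma \ref{dkcontlem}, $\Delta_\kappa \in \bbC^{-}_{\lambda}$, and the set of $\pmax$-names (coded as subsets of $\omega_1 \times \pmax \times \kappa^{\omega}$, say) of potential $\omega_1$-sequences from $b$ lives in $\bbC^{-}_{\lambda}$ by Lemma \ref{cmpscllem}. Taking realizations of all these names by the generic $G$ gives a set in $\bbC^{-}_{\lambda}[G]$ that includes every member of $(b^{\omega_1})^{\bbC^{+}_{\lambda}[G]}$; intersecting with $b^{\omega_1}$ as computed in $\bbC^{-}_{\lambda}[G]$ (equivalently, keeping only those realizations that are genuinely functions $\omega_1 \to b$) yields $(b^{\omega_1})^{\bbC^{+}_{\lambda}[G]} \in \bbC^{-}_{\lambda}[G]$, which by the strong regularity of $\lambda$ in $\bbC^{+}_{\lambda}[G]$ (Lemma \ref{strongregghlem}) is the same as $(b^{\omega_1})^{\bbC^{-}_{\lambda}[G]}$.

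The main obstacle I anticipate is the bookkeeping needed to see that \emph{all} $\omega_1$-sequences from $b$ in $\bbC^{+}_{\lambda}[G]$ arise as realizations of names lying at a bounded level $\bbC^{-}_{\eta}$ for a single $\eta < \lambda$ — in other words, upgrading the pointwise observation ``each such $s$ is coded by a set of reals'' to the uniform statement ``the collection of such $s$ is a set in $\bbC^{-}_{\lambda}[G]$.'' This is exactly where strong regularity does the work: the map sending each name to the least level of $\bbC^{-}$ at which it (and its realization under $G$) appears is a function from a set in $\bbC^{-}_{\lambda}$ into $\lambda$, definable in $\bbC^{+}_{\lambda}[G]$, so by Lemma \ref{strongregghlem} its range is bounded below $\lambda$, pinning all the relevant names and sequences into some $\bbC^{-}_{\eta}[G]$ with $\eta < \lambda$. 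Once that bound is in hand the closure under $\omega_1$-sequences follows, and Lemma \ref{kaplamcllem} is then immediate by applying this to $b = (\Add(\kappa,1) * \Add(\lambda,1))^{\bbC^{+}_{\lambda}[G]}$, which lies in $\bbC^{-}_{\lambda}[G]$ by Lemma \ref{dkcontlem} and the remarks following it, noting that $V[G]$ and $\bbC^{+}_{\lambda}[G]$ compute the same $\omega_1$-sequences from a set of reals since $\pmax$ is $\sigma$-closed.
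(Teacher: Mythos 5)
Your overall shape is right (code $\omega_1$-sequences by $\pmax$-names, then use strong regularity to bound everything below $\lambda$), but there is a genuine gap at the point you yourself flag as the main obstacle, and the patch you propose does not work. You say each sequence $s$ is the realization of a name ``coded by a subset of $\kappa^{\omega}$ --- i.e., by an element of $\Delta_\kappa$,'' and that the collection of all such names ``lives in $\bbC^{-}_{\lambda}$ by Lemma \ref{cmpscllem}.'' Both claims fail. A subset of $\kappa^{\omega}$ is not an element of $\Delta_\kappa$: since $\kappa=\Theta^{L(\Delta_\kappa)}$, there is no prewellordering of $\breals$ of length $\kappa$ in $\Delta_\kappa$, so arbitrary subsets of $\kappa^\omega$ cannot be coded by sets of reals of Wadge rank below $\kappa$. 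And Lemma \ref{cmpscllem} only gives that each \emph{individual} set in $\cP(c)\cap\bbC^{+}_{\lambda}$ is an \emph{element} of $\bbC^{-}_{\lambda}$; it does not make the whole collection $\cP(\omega_1\times\pmax\times\kappa^\omega)\cap\bbC^{+}_{\lambda}$ a \emph{set} in $\bbC^{-}_{\lambda}$ --- indeed it cannot be one, since $\lambda=(\kappa^{+})^{\bbC_\lambda}$ forces new subsets of $\kappa^{\omega}$ to appear cofinally often below $\lambda$. Consequently your strong-regularity step has no legitimate domain: the ``least level'' map is a function on a class of names that is not (yet known to be) a set in $\bbC^{-}_{\lambda}[G]$, so Lemma \ref{strongregghlem} does not apply and the argument is circular.

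The paper closes exactly this gap with a two-stage descent that you are missing. Fix $f\in b^{\omega_1}$ and, working in $L(U,\bbR)[G]$ (where Choice holds, which is also needed to select the names $h_f(\alpha)$ --- $\sigma$-closure of $\pmax$ gives agreement on $\omega$-sequences, not $\omega_1$-sequences; what is actually used is $\cP(\omega_1)\cap V[G]\subseteq L(\bbR)[G]$), pull the definability witnesses $B_\alpha\subseteq\beta^{\omega}$ back to $B'_\alpha\subseteq\kappa^{\omega}$ via a surjection $h\colon\kappa\to\beta$, and then use $\cof(\kappa)>\omega_1$ in $\bbC^{+}_{\lambda}[G]$ to find a single $\gamma<\kappa$ with every $B'_\alpha\cap\gamma^{\omega}$ nonempty. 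Only after this second descent can each coordinate be coded by a set of reals $C_\alpha$ of bounded Wadge rank, whence the whole sequence $\langle C_\alpha:\alpha<\omega_1\rangle$ lies in $L(\Delta_\kappa)[T]$ for some $T\subseteq\omega_1$. Thus every $f\in b^{\omega_1}$ is determined by a point of $\cP(\omega_1)\times\Delta_\kappa$, and \emph{that} set genuinely is an element of $\bbC^{-}_{\lambda}[G]$ (Lemma \ref{dkcontlem} plus $\cP(\omega_1)\subseteq L_\lambda(\bbR)[G]$), so strong regularity applies to the induced map $\cP(\omega_1)\times\Delta_\kappa\to\lambda$ and bounds $b^{\omega_1}$ below some $\bbC^{-}_{\eta}[G]$. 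Without the reduction from $\kappa^{\omega}$ down to $\gamma^{\omega}$ for $\gamma<\kappa$, your argument does not go through.
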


\begin{proof}
Since $\lambda < \Theta$, $\bbC^{-}_{\lambda}$ is a surjective image of $\breals$ in $V$.
Let $U \subseteq \breals$ be such that $\bbC^{-}_{\lambda}$ is a surjective image of $\breals$ in $L(U, \bbR)$.
Since $\breals$ is wellordered in $L(U, \bbR)[G]$, there exists in $L(U, \bbR)[G]$ a function picking for each
$x \in \bbC^{-}_{\lambda}[G]$ a $\pmax$-name $\tau_{x} \in \bbC^{-}_{\lambda}$ such that $\tau_{x,G} = x$.
Since $\cP(\omega_{1}) \cap V[G] \subseteq L(\bbR)[G]$, \[(\bbC^{-}_{\lambda}[G])^{\omega_{1}} \cap V[G]
\subseteq L(U, \bbR)[G].\]


We work in $L(U, \bbR)[G]$, which satisfies Choice.
Fix $b\in \bbC^-_{\lambda}[G]$, and let $\beta < \lambda$ be
such that $\Delta_{\kappa}, \tau_{b} \in \bbC^{-}_{\beta}$.
It follows that every member of $b$ is the realization of a name in $\bbC^{-}_{\beta}$.
We first show that $b^{\omega_1}\subseteq \bbC^{-}_{\lambda}[G]$.

Fix $f \in b^{\omega_{1}}$. Since Choice holds, there is an \[h_f \in (\bbC^{-}_{\beta})^{\omega_{1}}\] such that, for every $\alpha<\omega_1$,
$h_f(\alpha)$ is a $\pmax$ name in $\bbC^{-}_{\beta}$ such that $h_{f}(\alpha)_{G} = f(\alpha)$.
Fix  a function $c_{f} \colon \omega_{1} \to \omega$  and a sequence $\langle B_\alpha:\alpha<\omega_1\rangle$ such that each
$B_\alpha$ is a nonempty subset of $\beta^{\omega}$ and each $h_f(\alpha)$ is definable in
$\bbC^{-}_{\beta}$ from $\cH \restrict \beta$, and each member of the corresponding $B_\alpha$,
via the formula with G\"{o}del number $c_{f}(\alpha)$.
Since $\cP(\omega_{1}) \subseteq L_{\lambda}(\bbR)[G]$, $c_{f} \in \bbC_{\lambda}[G]$.

Let $h \colon \kappa \to \beta$ be a surjection in $\bbC^{-}_\lambda$ (which exists by Lemma \ref{cmpscllem}), and, for each $\alpha < \omega_1$ let
\[B'_\alpha = \{ x \in \kappa^{\omega} : h \circ x \in B_\alpha\}.\]
As there is no cofinal function from $\omega_{1}$ to $\kappa$ in $\bbC^{+}_{\lambda}[G]$, there is a $\gamma < \kappa$ such that $B'_\alpha \cap \gamma^{\omega}$ is nonempty
 for each $\alpha < \omega_{1}$.
Let $r \colon \breals \to \gamma^{\omega}$ be a surjection in $L(\Delta_{\kappa})$, and for each $\alpha < \omega_{1}$ let
\[C_\alpha = r^{-1}[B'_a \cap \gamma^{\omega}].\]
Then each $C_\alpha$ is a set of reals in $L(\Delta_{\kappa})$.

In $L(\Delta_{\kappa})$ there is a set of reals of Wadge rank greater than each $C_{\alpha}$, so, in $L(U, \bbR)[G]$,  there is a subset $T$ of $\omega_{1}$ such that
\[\langle C_\alpha : \alpha < \omega_{1} \rangle \in L(\Delta_\kappa)[T].\]
Since $\powerset(\omega_{1}) \subseteq L_{\lambda}(\bbR)[G]$,
it follows that $\langle B'_{\alpha} \cap \gamma^{\omega} : \alpha \in \omega_{1} \rangle$ is in $L(\Delta_{\kappa})[G]$, and that
\[\langle \{ h \circ x : x \in B'_{\alpha} \cap \gamma^{\omega}\} : \alpha \in \omega_{1} \rangle,\]
$\langle h_{f}(\alpha) : \alpha < \omega_{1} \rangle$ and $f$  are in $\bbC^{-}_{\lambda}[G]$.

Suppose now that $b^{\omega_1}\not \subseteq \bbC_{\alpha}[G]$ for any $\alpha < \lambda$.
We then have a function $g\colon b^{\omega_1}\to \lambda$ that is unbounded in $\lambda$ with
$g\in \bbC_{\lambda}[G]$.
Using the above coding, $g$ induces a cofinal function
\[h \colon \powerset(\omega_{1}) \times \Delta_{\kappa} \to \lambda\] in $\bbC_{\lambda}[G]$, with the first argument playing the role of $T$ above and the second coding both a wellordering of $\breals$ in ordertype $\gamma$ and set of reals of Wadge rank above each $C_{\alpha}$.
This contradicts Lemma \ref{strongregghlem}.
\end{proof}

\begin{lemma}\label{kaplamcllem} $V[G] \models ((\Add(\kappa, 1)*\Add(\lambda, 1))^{\bbC^{+}_{\lambda}[G]})^{\omega_{1}} \subseteq \bbC^{+}_{\lambda}[G]$
\end{lemma}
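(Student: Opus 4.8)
The plan is to deduce Lemma~\ref{kaplamcllem} from Lemma~\ref{omega1 functions} together with the earlier structural facts about $\bbC^{-}_\lambda$. First I would observe that, by Lemma~\ref{dkcontlem} and the discussion following it, the partial order $\bbP_0 := (\pmax * \Add(\kappa,1))^{\bbC^{+}_\lambda}$ lies in $\bbC^{-}_\lambda$; since $G$ is the $\pmax$-part of a generic for a three-step iteration, the two-step iteration $(\pmax * \Add(\kappa,1))^{\bbC^{+}_\lambda}$ is evaluated the same way in $\bbC^{-}_\lambda$, $\bbC_\lambda$ and $\bbC^{+}_\lambda$, and $\Add(\kappa,1)^{\bbC^{+}_\lambda[G]} \in \bbC^{-}_\lambda[G]$. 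Next I would note that $\Add(\lambda,1)^{\bbC^{+}_\lambda[G]}$, a partial order whose conditions are bounded partial functions from $\lambda$ to $2$ in $\bbC^{+}_\lambda[G]$, is also (a set) in $\bbC^{-}_\lambda[G]$: each such condition has bounded support $<\lambda$, and by Lemma~\ref{cmpscllem} (applied in the extension, using that $\pmax * \Add(\kappa,1) \in \bbC^{-}_\lambda$) and the regularity of $\lambda$ in $\bbC^{+}_\lambda[G,H]$ from Lemma~\ref{strongregghlem}, the collection of all such conditions is an element of $\bbC^{-}_\lambda[G]$. Hence the full two-step iteration $\bbQ := (\Add(\kappa,1)*\Add(\lambda,1))^{\bbC^{+}_\lambda[G]}$ is, as a set of conditions together with its order, an element of $\bbC^{-}_\lambda[G]$.

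Once $\bbQ \in \bbC^{-}_\lambda[G]$ is in hand, the lemma is almost immediate: apply Lemma~\ref{omega1 functions} with $b = \bbQ$. That lemma gives, working in $V[G]$, that $\bbQ^{\omega_1} \in \bbC^{-}_\lambda[G] \subseteq \bbC^{+}_\lambda[G]$. In particular every $\omega_1$-sequence of elements of $\bbQ$ that exists in $V[G]$ already belongs to $\bbC^{+}_\lambda[G]$, which is exactly the assertion $V[G] \models (\bbQ^{\omega_1}) \subseteq \bbC^{+}_\lambda[G]$. Strictly speaking one wants $\bbQ^{\omega_1} \cap V[G] \subseteq \bbC^{+}_\lambda[G]$, but that is what the phrase ``in $V[G]$, $b^{\omega_1} \in \bbC^{-}_\lambda[G]$'' of Lemma~\ref{omega1 functions} delivers, since $(b^{\omega_1})^{V[G]}$ is computed in $V[G]$ and shown to lie inside $\bbC^{-}_\lambda[G]$.

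I expect the main obstacle to be the bookkeeping in the first paragraph: verifying carefully that $\Add(\lambda,1)^{\bbC^{+}_\lambda[G]}$ is genuinely an element of $\bbC^{-}_\lambda[G]$ and not merely a subset of it, and likewise that the two-step iteration (which as a set of pairs $(p, \dot q)$ involves $\Add(\lambda,1)$-names for conditions) is in $\bbC^{-}_\lambda[G]$. The key point is that $\Add(\kappa,1)$ is small enough ($\subseteq \bbC^{-}_\lambda$ with $\kappa = \Theta^{\bbC^{+}_\lambda}$ below $\lambda$) that one may take the relevant names to be hereditarily of size $<\lambda$, coded by subsets of $\kappa$, and then invoke that $\bbC^{+}_\lambda$ and $\bbC_\lambda$ (and, via the reflection arguments of Lemmas~\ref{cmpscllem} and~\ref{strongregghlem}, $\bbC^{-}_\lambda$) agree on such sets; the regularity of $\lambda$ in $\bbC^{+}_\lambda[G,H]$ is what keeps the whole name-collection bounded below $\lambda$. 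Modulo that verification, the rest is a direct appeal to Lemma~\ref{omega1 functions}.
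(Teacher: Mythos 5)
There is a genuine gap in your first paragraph: the claim that $\Add(\lambda,1)^{\bbC^{+}_{\lambda}[G]}$, and hence $\bbQ = (\Add(\kappa,1)*\Add(\lambda,1))^{\bbC^{+}_{\lambda}[G]}$, is an \emph{element} of $\bbC^{-}_{\lambda}[G]$ is false. The model $\bbC^{-}_{\lambda}$ has ordinal height $\lambda$, and while each individual condition in $\Add(\lambda,1)$ has bounded support and so lies in some $\bbC^{-}_{\beta}$ with $\beta<\lambda$, the supports of the conditions are unbounded in $\lambda$ as you range over all of them; consequently no single stage $\bbC^{-}_{\beta}$ (nor any name in $\bbC^{-}_{\lambda}$) can capture the whole partial order. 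Neither Lemma~\ref{cmpscllem} (which only applies to $\cP(b)$ for $b$ already an element of $\bbC^{-}_{\lambda}$, and only yields $\subseteq$, not $\in$) nor the regularity of $\lambda$ repairs this: regularity bounds the ranges of functions defined on elements of $\bbC^{-}_{\lambda}$, but it cannot bound the ranks of the conditions themselves. The paper is careful about exactly this distinction, asserting $(\pmax*\Add(\kappa,1))^{\bbC^{+}_{\lambda}}\in\bbC^{-}_{\lambda}$ but only $(\pmax*\Add(\kappa,1)*\Add(\lambda,1))^{\bbC^{+}_{\lambda}}\subseteq\bbC^{-}_{\lambda}$.

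The missing idea is a cofinality argument in place of your ``$\bbQ\in\bbC^{-}_{\lambda}[G]$'' step. One has $\bbQ\subseteq\bbC^{-}_{\lambda}[G]$, and $\cof(\lambda)\geq\omega_{2}$ in $V[G]$ (by $\Join_{\lambda}$ together with Lemma~\ref{lemma:coflemma}); hence any $\omega_{1}$-sequence of elements of $\bbQ$ in $V[G]$ has range contained in a bounded piece of $\bbQ$, which \emph{is} an element $b$ of $\bbC^{-}_{\lambda}[G]$. Applying Lemma~\ref{omega1 functions} to that $b$ (rather than to $\bbQ$ itself) then gives the conclusion. Your second paragraph, the appeal to Lemma~\ref{omega1 functions} and the reading of ``in $V[G]$, $b^{\omega_{1}}\in\bbC^{-}_{\lambda}[G]$,'' is fine once the argument is routed through such a bounded $b$.
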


\begin{proof}
As noted before Lemma \ref{dkcontlem}, each element of $((\Add(\kappa, 1)*\Add(\lambda, 1))^{\bbC^{+}_{\lambda}[G]}$
an element of $\bbC^{-}_{\lambda}[G]$.
Since $\cof(\lambda) = \omega_{2}$ in $V[G]$, every element of
\[((\Add(\kappa, 1)*\Add(\lambda, 1))^{\bbC^{+}_{\lambda}[G]})^{\omega_{1}}\] in $V[G]$ has range contained in some element of $\bbC^{-}_{\lambda}[G]$. The lemma then follows from Lemma \ref{omega1 functions}.
\end{proof}

\section{$\sf{DC}_{\aleph_3}$ in $\bbC^+_{\lambda}[G,H]$}\label{dc3sec}




Lemma \ref{dcomega_3lem} is the third item from the beginning of Section \ref{threadsec}, and completes the proof of Theorem \ref{mainthrm}.
The proof is a reflection argument as in Subsection \ref{dcproveoutlinessec}.

\begin{lemma}\label{omega2 closure}
There are stationarily many $\eta < \lambda$ such that, in $\bbC^+_{\lambda}[G, H]$,
\[\bbC^{-}_{\eta}[G, H]^{\omega_2}\subseteq \bbC^{-}_{\eta}[G, H].\]
\end{lemma}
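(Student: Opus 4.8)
The plan is to run the same kind of reflection argument used in Section \ref{dcproveoutlinessec} and in Lemma \ref{omega1 functions}, but one cardinal higher, tracking $\omega_2$-sequences instead of $\omega_1$-sequences. First I would set up the coding: since $\lambda<\Theta$, each $\bbC^{-}_{\eta}[G,H]$ (for $\eta<\lambda$) is a surjective image of $\breals$ in $V$, and every element of $\bbC^{-}_{\eta}[G,H]$ is the realization of a $(\pmax*\Add(\kappa,1))^{\bbC^{+}_{\lambda}}$-name lying in $\bbC^{-}_{\lambda}$, because $(\pmax*\Add(\kappa,1))^{\bbC^{+}_{\lambda}}\in\bbC^{-}_{\lambda}$ by Lemmas \ref{dkcontlem} and \ref{cmpscllem}. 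Working inside a model $L(U,\bbR)[G,H]$ of Choice that computes enough of this (using $\cP(\omega_2)\cap V[G,H]\subseteq L(\bbR)[G,H]$ from the $\pmax$ facts, so that sequences of length $\omega_2$ of such names can be captured), I would fix a $\beta<\lambda$ with $\Delta_\kappa$ and the relevant name-parameters in $\bbC^{-}_\beta$, a surjection $h\colon\kappa\to\beta$ in $\bbC^{-}_\lambda$, and a surjection $r\colon\breals\to\gamma^\omega$ in $L(\Delta_\kappa)$ for suitable $\gamma<\kappa$; then an $\omega_2$-sequence $f\in(\bbC^{-}_\eta[G,H])^{\omega_2}$ gets coded by an $\omega_2$-sequence of sets of reals in $L(\Delta_\kappa)$ together with a subset of $\omega_2$. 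The key point is that $\cof(\kappa)>\omega_2$ in $\bbC^{+}_\lambda[G,H]$ (since $\kappa=\Theta^{\bbC^{+}_\lambda}$ is regular and $\pmax*\Add(\kappa,1)$ is small enough, and $\Add(\lambda,1)$ adds no subsets of $\kappa$), so the $\omega_2$ pieces of $B'_\alpha\cap\gamma^\omega$ can be captured below a single $\gamma<\kappa$, and $\cof(\lambda)=\omega_2$ in $V[G,H]$ by Lemmas \ref{lemma:coflemma} and \ref{strongregghlem}.

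The crucial change from Lemma \ref{omega1 functions} is that here we want a \emph{stationary set} of $\eta$ witnessing the closure, not the single ordinal $\lambda$, so the argument must be run relative to each candidate $\eta$ and then a pressing-down / reflection argument invoked. I would argue as follows: for each $\eta<\lambda$ and each $f\in(\bbC^{-}_\eta[G,H])^{\omega_2}\cap\bbC^{+}_\lambda[G,H]$, the coding above shows $f\in\bbC^{-}_{\eta'}[G,H]$ for some $\eta'<\lambda$ (exactly as in Lemma \ref{omega1 functions}, the range of the relevant coding function is bounded below $\lambda$ by the strong regularity of $\lambda$, i.e. Lemma \ref{strongregghlem}). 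This yields a function $F\colon\lambda\to\lambda$, definable in $\bbC^{+}_\lambda[G,H]$, such that whenever $\eta$ is closed under $F$ (and above $\beta$), $(\bbC^{-}_\eta[G,H])^{\omega_2}\cap\bbC^{+}_\lambda[G,H]\subseteq\bbC^{-}_\eta[G,H]$. Since $\cof(\lambda)=\omega_2\geq\omega_1$, the set of $\eta<\lambda$ closed under $F$ is club in $\lambda$ in $\bbC^{+}_\lambda[G,H]$, hence in particular stationary, which is what is claimed. (One must be slightly careful that the quantifier "for all $f$" is over a set coded by reals so that the bound defining $F$ exists; this is exactly where $\cP(\omega_2)\cap V[G,H]\subseteq L(\bbR)[G,H]$ and the wellordering of $\breals$ in $L(U,\bbR)[G,H]$ are used, just as in the earlier lemma.)

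The main obstacle I anticipate is the bookkeeping at the level of $\omega_2$: in Lemma \ref{omega1 functions} one uses $\cP(\omega_1)\cap V[G]\subseteq L(\bbR)[G]$ to see that an $\omega_1$-indexed sequence of codes lives in the right model, and here one needs the analogous statement for $\omega_2$ in the model $\bbC^{+}_\lambda[G,H]$ — this should follow because $\Add(\lambda,1)$ adds no subsets of $\omega_2=\kappa$ over $\bbC^{+}_\lambda[G]$ (by $\DC_{\aleph_2}$ in $\bbC^{+}_\lambda[G]$, Lemma \ref{cpgdc2lem}) and $\cP(\omega_1)^{\bbC^{+}_\lambda[G]}$ is already controlled, but one must also know $\omega_2^{\bbC^{+}_\lambda[G,H]}=\kappa$ and that $\cof(\kappa)>\omega_2$ there, which I would extract from the regularity of $\kappa=\Theta^{\bbC^{+}_\lambda}$ together with the chain conditions of the forcing; and one must confirm that the $\omega_2$-sequence of sets of reals $\langle C_\alpha:\alpha<\omega_2\rangle$ is absorbed into $L(\Delta_\kappa)[T]$ for a single $T\subseteq\omega_2$, using that $L(\Delta_\kappa)\models\ADR$ (so there is a set of reals of Wadge rank above all of them) exactly as in Lemma \ref{omega1 functions}. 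Once these absoluteness and cofinality facts are in place, the reflection/pressing-down step giving stationarity is routine.
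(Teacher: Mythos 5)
Your proposal is correct and follows essentially the same reflection argument as the paper: code each value $f(\gamma)$ by a nonempty set of definability witnesses in $\alpha^{\omega}$, pull it back along a surjection $s\colon\kappa\to\alpha$ lying in some $\bbC^{-}_{\beta}$, use $\cof(\kappa)>\omega_{2}$ in $\bbC^{+}_{\lambda}[G,H]$ to trap all the witness sets below a single $\delta<\kappa$, absorb the resulting $\omega_{2}$-sequence into $L(\Delta_{\kappa})[G]$ as a single set of reals, and conclude from the (strong) regularity of $\lambda$ that club-many, hence stationarily many, $\eta$ are closed under this operation. The only differences are cosmetic --- the paper obtains a uniform target $\bbC^{-}_{\beta}[G,H]$ for every $f\in\bbC^{-}_{\alpha}[G,H]^{\omega_{2}}$ at once, so no supremum over $f$ (and hence no extra appeal to strong regularity to bound it) is needed, and your side remark that $\cP(\omega_{2})\cap V[G,H]\subseteq L(\bbR)[G,H]$ is not literally true, though the correct justification you also give (the sequence of code sets is itself coded by a set of reals and so lands in $L(\Delta_{\kappa})[G]$) is exactly what the paper uses.
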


\begin{proof}

Since $\lambda$ is regular in $\bbC^{+}[G, H]$ by Lemma \ref{strongregghlem}, it suffices to show that for all $\alpha < \beta < \lambda$, if there exists a surjection $s \colon \kappa \to \alpha$ in $\bbC^{-}_{\beta}$ then \[\bbC^{-}_{\alpha}[G, H]^{\omega_2}\subseteq \bbC^{-}_{\beta}[G, H].\]
Fix such $\alpha < \beta$, and let $s \colon \kappa \to \alpha$ be a surjection in $\bbC^{-}_{\beta}$.
Fix a function $f \colon \omega_{2} \to \bbC^{-}_{\alpha}[G, H]$ in $\bbC^{+}_{\lambda}[G,H]$.

For each $\gamma < \omega_{2}$, let $B_{\gamma}$ be the set of $x \in \alpha^{\omega}$ such that
$f(\gamma)$ is definable in $\bbC^{-}_{\alpha}[G, H]$ from $\cH \restrict \alpha$, $G$, $H$ and $x$ via the formula with G\"{o}del code $x(0)$.
Since $\cof(\kappa) > \omega_{2}$ in $\bbC^{+}_{\lambda}[G, H]$, there is a $\delta < \kappa$ such that for all $\gamma < \omega_{2}$, \[B'_{\gamma} = \{ y \in \delta^{\omega} : s \circ y \in B_{\gamma}\}\] is nonempty.
The sequence $\langle B'_{\gamma} : \gamma < \omega_{2} \rangle$ is coded by a set of reals in $\bbC^{+}_{\lambda}[G, H]$, so it is in $L(\Delta_{\kappa})[G]$. It follows that $f \in \bbC^{-}_{\beta}[G,H]$.
\end{proof}

\begin{lemma}\label{dcomega_3lem} $\bbC^+_{\lambda}[G, H]\models \sf{DC}_{\aleph_3}$.
\end{lemma}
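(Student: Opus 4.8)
The plan is to imitate the structure of Subsection \ref{dcproveoutlinessec} and the proof of Lemma \ref{cpgdc2lem}, but one cardinal higher. By Lemma \ref{dcreducelem} (applied with $m = 3$, $\bbP = (\pmax * \Add(\kappa,1))^{\bbC^+_\lambda}$, and using that $\DC_{\aleph_2}$ already holds in $\bbC^+_\lambda[G,H]$ because $\Add(\kappa,1)$ adds no new subsets of $\omega_2$ over $\bbC^+_\lambda[G]$, which satisfies $\DC_{\aleph_2}$ by Lemma \ref{cpgdc2lem}), it suffices to show that in $\bbC^+_\lambda[G,H]$ every $\less\omega_3$-closed tree on $\lambda^\omega$ of height $\omega_3$ has a cofinal branch. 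As in Subsection \ref{dcproveoutlinessec}, the first step is to reduce trees on $\lambda^\omega$ to trees on $\kappa^\omega$: using the strong regularity of $\lambda$ in $\bbC^+_\lambda[G,H]$ (Lemma \ref{strongregghlem}), it is enough to know that there is no cofinal map from $\omega_3$ to $\lambda$ and no cofinal map from $(\gamma^\omega)^\beta$ to $\lambda$ for $\gamma < \lambda$ and $\beta < \omega_3$; the former is immediate from Lemma \ref{strongregghlem} (as $\omega_3 < \lambda$), and the latter reduces, since $\lambda = \kappa^+$ in $\bbC_\lambda$, to bounding $(\kappa^\omega)^{\omega_2}$, which follows from Lemma \ref{omega2 closure}: for stationarily (hence cofinally) many $\eta < \lambda$, $(\kappa^\omega)^{\omega_2} \cap \bbC^+_\lambda[G,H] \subseteq \bbC^-_\eta[G,H]$, and then Lemma \ref{strongregghlem} bounds the image. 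So every $\less\omega_3$-closed relation $R$ on $\lambda^\omega$ in $\bbC^+_\lambda[G,H]$ has $R \cap \gamma^\omega$ already $\less\omega_3$-closed for some $\gamma < \lambda$, and then for some $\gamma' < \kappa$.

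The second step is to show that every $\less\omega_3$-closed tree on $\gamma^\omega$ (for $\gamma < \kappa$) in $\bbC^+_\lambda[G,H]$ has a cofinal branch. For this we want the analogue, one level up, of the facts listed in Subsection \ref{dcproveoutlinessec}: there is no cofinal map from $\omega_3$ to $\kappa$ (true because $\kappa = \Theta^{\bbC^+_\lambda}$ is regular in $\bbC^+_\lambda$, $\pmax \subseteq H(\aleph_1)$, and $\Add(\kappa,1)$ is $\less\kappa$-closed hence adds no cofinal $\omega_3$-sequence, since $\omega_3 = \kappa^{\bbC^+_\lambda[G]} < \kappa$... more precisely $\kappa = \omega_3^{\bbC^+_\lambda[G]}$, so a cofinal map from $\omega_3 = \omega_3^{\bbC^+_\lambda[G,H]}$ to $\kappa$ would be a cofinal map from $\omega_3$ to $\omega_3$, absurd); and there is no cofinal map from $(\delta^\omega)^\beta$ to $\kappa$ for $\delta < \kappa$ and $\beta < \omega_3$, because $\kappa = \omega_3^{\bbC^+_\lambda[G,H]}$ and $\aleph_2^{\aleph_2} = \aleph_2$ in $\bbC^+_\lambda[G,H]$ (which holds since $\DC_{\aleph_2}$, $2^{\aleph_0} = \aleph_2$, and $\Add(\kappa,1)$ adds no new $\omega_2$-sequences over $\bbC^+_\lambda[G]$). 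These reduce matters to $\less\omega_3$-closed trees on $\delta^\omega$ for $\delta < \kappa$, and since each such $\delta^\omega$ is a surjective image of the wellordered set $\breals$ in $\bbC^+_\lambda[G,H]$ (via $\Delta_\kappa = \cP(\bbR)^{\bbC^+_\lambda}$, which is wellordered after forcing with $\Add(\kappa,1)$), $\bbC^+_\lambda[G,H]$ can wellorder $\delta^\omega$ and hence build a branch through any closed tree on it by transfinite recursion of length $\omega_3 < \kappa$.

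The main obstacle is verifying the no-cofinal-map and $\aleph_2^{\aleph_2} = \aleph_2$ facts precisely enough to know that $\Add(\kappa,1)$ and $\Add(\lambda,1)$ behave as expected over the $\pmax$ extension: one must be careful that $\Add(\kappa,1)^{\bbC^+_\lambda[G]}$, computed inside $\bbC^+_\lambda[G]$, is genuinely $\less\kappa$-closed there (so that $H$ adds no new short sequences of ordinals below $\kappa$, in particular no new cofinal maps into $\kappa$ and no new subsets of $\omega_2$), and to carry this closure and the wellordering of $\Delta_\kappa$ across the further passage to $V[G]$-level computations as in Lemma \ref{omega1 functions} and Lemma \ref{omega2 closure}. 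Once the reduction to trees on $\delta^\omega$ with $\delta < \kappa$ is in place, the existence of a cofinal branch is routine from $\AC$ on the wellordered set $\breals$ together with $\less\omega_3$-closure and $\omega_3 < \cf(\kappa)$.

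\begin{proof}
By Lemma \ref{dcreducelem} with $m = 3$ and $\bbP = (\pmax * \Add(\kappa, 1))^{\bbC^+_\lambda}$, it suffices to check that $\DC_{\aleph_k}$ holds in $\bbC^+_\lambda[G, H]$ for $k < 3$ and that every $\less\omega_3$-closed tree on $\lambda^\omega$ of height $\omega_3$ in $\bbC^+_\lambda[G, H]$ has a cofinal branch. Since $(\Add(\kappa,1))^{\bbC^+_\lambda[G]}$ is $\less\kappa$-closed in $\bbC^+_\lambda[G]$ and $\kappa = \omega_3^{\bbC^+_\lambda[G]}$, forcing with it adds no new subsets of $\omega_2$; as $\bbC^+_\lambda[G] \models \DC_{\aleph_2}$ by Lemma \ref{cpgdc2lem}, we get $\bbC^+_\lambda[G, H] \models \DC_{\aleph_2}$, which gives the required instances.

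We now argue that every $\less\omega_3$-closed relation $R$ on $\lambda^\omega$ in $\bbC^+_\lambda[G, H]$ has $R \cap \gamma^\omega$ of the same kind for some $\gamma < \kappa$, and that every $\less\omega_3$-closed tree on such a $\gamma^\omega$ has a cofinal branch. For the first point, by Lemma \ref{strongregghlem} there is no cofinal map from $\omega_3$ to $\lambda$ in $\bbC^+_\lambda[G, H]$ (as $\omega_3 < \lambda$), and since $\lambda = \kappa^+$ in $\bbC_\lambda$ it suffices to rule out cofinal maps from $(\kappa^\omega)^{\omega_2}$ to $\lambda$. By Lemma \ref{omega2 closure}, $(\kappa^\omega)^{\omega_2} \cap \bbC^+_\lambda[G, H] \subseteq \bbC^-_\eta[G, H]$ for some $\eta < \lambda$, and Lemma \ref{strongregghlem} then bounds the image of any such map. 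Thus, working inside $\bbC^+_\lambda[G,H]$, for each cardinal $\delta \le \aleph_3$ and each $\delta$-closed relation $R$ on $\lambda^\omega$ there is $\gamma < \lambda$ with $R \cap \gamma^\omega$ still $\delta$-closed, and then, arguing the same way one level down, $\gamma$ may be taken below $\kappa$, using that $\kappa = \Theta^{\bbC^+_\lambda}$ is regular in $\bbC^+_\lambda$, that $\pmax \subseteq H(\aleph_1)$, that $(\Add(\kappa,1))^{\bbC^+_\lambda[G]}$ is $\less\kappa$-closed, so that there is no cofinal map from $\omega_3$ to $\kappa$ in $\bbC^+_\lambda[G, H]$, and that $\aleph_2^{\aleph_2} = \aleph_2$ in $\bbC^+_\lambda[G, H]$ (which holds there since $\DC_{\aleph_2}$, $2^{\aleph_0} = \aleph_2$, and $H$ adds no new $\omega_2$-sequences), so that there is no cofinal map from $(\delta^\omega)^\beta$ to $\kappa$ for $\delta < \kappa$, $\beta < \omega_3$.

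Finally, fix $\delta < \kappa$ and a $\less\omega_3$-closed tree $T$ on $\delta^\omega$ of height $\omega_3$ in $\bbC^+_\lambda[G, H]$. By Lemma \ref{dkcontlem}, $\Delta_\kappa = \cP(\bbR) \cap \bbC^+_\lambda \in \bbC^-_\lambda$, and after forcing with $\Add(\kappa, 1)$ the set $\Delta_\kappa$ is wellordered, so $\breals$ is wellordered in $\bbC^+_\lambda[G, H]$; since $\delta < \kappa = \Theta^{\bbC^+_\lambda}$, $\delta^\omega$ is a surjective image of $\breals$ and hence wellordered in $\bbC^+_\lambda[G, H]$. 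As $\omega_3 < \cf(\kappa)$ in $\bbC^+_\lambda[G, H]$ and $T$ is $\less\omega_3$-closed, a straightforward transfinite recursion of length $\omega_3$ along the wellordering of $\delta^\omega$ produces a cofinal branch of $T$. Combining, every $\less\omega_3$-closed tree on $\lambda^\omega$ of height $\omega_3$ in $\bbC^+_\lambda[G, H]$ has a cofinal branch, so $\bbC^+_\lambda[G, H] \models \DC_{\aleph_3}$.
\end{proof}
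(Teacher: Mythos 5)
Your overall strategy matches the paper's: reduce via Lemma \ref{dcreducelem} to $\less\omega_{3}$-closed trees on $\lambda^{\omega}$ of height $\omega_{3}$, use strong regularity (Lemma \ref{strongregghlem}) together with Lemma \ref{omega2 closure} to pass to trees on $\kappa^{\omega}$, and then exploit the wellordering that $H$ provides. But your final reduction, from trees on $\kappa^{\omega}$ down to trees on $\delta^{\omega}$ for some $\delta < \kappa$, is a genuine gap: the claimed reduction is false. The supporting assertion that ``there is no cofinal map from $\omega_{3}$ to $\kappa$ in $\bbC^{+}_{\lambda}[G,H]$'' is vacuous, since $\kappa = \omega_{3}^{\bbC^{+}_{\lambda}[G,H]}$ and the identity is such a map. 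Concretely, let $R$ be the relation on $\kappa^{\omega}$ given by $x \, R \, y$ iff $x(0) < y(0)$. Since $\kappa$ is regular, $R$ is $\less\omega_{3}$-closed; but every $\delta < \kappa$ has $\cf(\delta) \leq \omega_{2}$, so $R \cap \delta^{\omega}$ admits a chain of length $\cf(\delta) < \omega_{3}$ whose first coordinates are cofinal in $\delta$, and that chain has no upper bound in $\delta^{\omega}$. Hence no restriction to $\delta^{\omega}$ with $\delta < \kappa$ is $\less\omega_{3}$-closed. The analogous reduction in Subsection \ref{dcproveoutlinessec} works only because there the chains have length less than $\omega_{2}$ while $\kappa = \omega_{3}$; transposed one level up, the target ordinal $\kappa$ coincides with $\omega_{3}$ and there is no room left.

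The repair is immediate and is exactly what the paper does: stop at $\kappa^{\omega}$. You already have the needed ingredient. After forcing with $\Add(\kappa,1)$ the set $\Delta_{\kappa}$, and hence $\breals$, is wellordered; each $\delta^{\omega}$ with $\delta < \kappa$ is a surjective image of $\breals$; and $\kappa^{\omega} = \bigcup_{\delta<\kappa}\delta^{\omega}$ since $\cf(\kappa) > \omega$. So $\kappa^{\omega}$ itself is wellordered in $\bbC^{+}_{\lambda}[G,H]$, and your transfinite recursion runs directly on a $\less\omega_{3}$-closed tree on $\kappa^{\omega}$ of height $\omega_{3}$, with no further reduction. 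The rest of your argument --- the verification that $\DC_{\aleph_{2}}$ holds in $\bbC^{+}_{\lambda}[G,H]$ so that Lemma \ref{dcreducelem} applies, and the passage from $\lambda^{\omega}$ to $\kappa^{\omega}$ via Lemmas \ref{strongregghlem} and \ref{omega2 closure} --- is sound and agrees with the paper.
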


\begin{proof}
By Lemma \ref{dcreducelem}, it suffices to show that, in $\bbC^{+}_{\lambda}[G, H]$, every $\less\omega_{3}$-closed tree of height $\omega_{3}$ on $\lambda^{\omega}$ has a cofinal branch.

In $\bbC^{+}_{\lambda}[G,H]$, $\cof(\lambda) > \omega_{3}$ and $\lambda$ is strongly regular, by Lemma \ref{strongregghlem}.
By Lemma \ref{omega2 closure}, then, it suffices to consider trees on $\kappa^{\omega}$.
Since $\kappa^{\omega}$ is wellordered in $\bbC^{+}_{\lambda}[G, H]$ the lemma follows.
\end{proof}

\section{Further work}

The arguments in this paper naturally adapt to produce models of $\ZFC$ in which $\square(\aleph_{n}, \omega)$ fails for all $n \in \omega$, from models of the appropriate generalizations of $\Join_{\lambda}$. Since these generalizations are not yet known to be consistent, we save these arguments for a later paper. In addition, there is much more that can be said about the types of Chang models that we consider in this paper. Some observations that were not needed for the proof of Theorem \ref{mainthrm} have been collected in \cite{LSmoreChang}.

\bibliographystyle{plain}
\bibliography{KC}
\end{document}